\newcommand{\R}{{\mathbb R}}
\def\Xint#1{\mathchoice
{\XXint\displaystyle\textstyle{#1}}%
{\XXint\textstyle\scriptstyle{#1}}%
{\XXint\scriptstyle\scriptscriptstyle{#1}}%
{\XXint\scriptscriptstyle\scriptscriptstyle{#1}}%
\!\int}
\def\XXint#1#2#3{{\setbox0=\hbox{$#1{#2#3}{\int}$ }
\vcenter{\hbox{$#2#3$ }}\kern-.6\wd0}}
\def\dashint{\Xint-}
\newcommand{\mres}{\mathbin{\vrule height 1.6ex depth 0pt width
0.13ex\vrule height 0.13ex depth 0pt width 1.3ex}}
\newcommand{\N}{\mathbb{N}}
\newcommand{\Z}{\mathbb{Z}}
\newcommand{\diam}{\textnormal{diam$\,$}}
\newcommand{\Ha}{\mathcal{H}}
\newcommand{\modd}{\textnormal{mod}}
\newcommand{\capp}{\textnormal{cap}}
\newcommand{\dist}{\textnormal{dist}}
\newtheorem{theorem}{\textbf{THEOREM}}[section]
\newtheorem{lemma}[theorem]{\textsc{Lemma}}
\newtheorem{proposition}[theorem]{\textsc{Proposition}}
\newtheorem{corollary}[theorem]{\textsc{Corollary}}
\theoremstyle{definition}
{\theoremstyle{remark} \newtheorem{remark}[theorem]{Remark}}
\def\charfn_#1{{\raise1.2pt\hbox{$\chi_{\kern-1pt\lower3pt\hbox{{$\scriptstyle#1$}}}$}}}
\def\leq{\leqslant }
\def\geq{\geqslant }
\def\Xint#1{\mathchoice
{\XXint\displaystyle\textstyle{#1}}%
{\XXint\textstyle\scriptstyle{#1}}%
{\XXint\scriptstyle\scriptscriptstyle{#1}}%
{\XXint\scriptscriptstyle\scriptscriptstyle{#1}}%
\!\int}
\def\XXint#1#2#3{{\setbox0=\hbox{$#1{#2#3}{\int}$}
\vcenter{\hbox{$#2#3$}}\kern-.5\wd0}}
\def\dashint{\Xint-}
\begin{document}

\title{Duality of moduli in regular metric spaces}
\author{Atte Lohvansuu and Kai Rajala} 
\let\thefootnote\relax\footnote{\emph{Mathematics Subject Classification 2010:} Primary 30L10, Secondary 30C65, 28A75,
51F99.}
\thanks{Both authors were supported by the Academy of Finland, project number 308659. The first author was also supported by the Vilho, Yrj\"o and Kalle V\"ais\"al\"a foundation. Parts of this research were carried out when the first author was visiting the Mathematics Department in the University of Michigan. He would like to thank the department for hospitality. }
\begin{abstract}Gehring \cite{Gehring1962} and Ziemer \cite{Ziemer1967} proved that the $p$-modulus of the family of paths connecting two continua is dual to the $p^*$-modulus of the corresponding family of separating hypersurfaces. In this paper we show that a similar result holds in complete Ahlfors-regular metric spaces that support a weak $1$-Poincar\'e inequality. As an application we obtain a new characterization for quasiconformal mappings between such spaces. 
\end{abstract}

\maketitle

\renewcommand{\baselinestretch}{1.2}

\section{Introduction}
The modulus of a path family is a widely used tool in geometric function theory and its generalizations to $\R^n$ and furthermore to metric spaces, see \cite{HKST},\cite{LehtoVirtanenQC} and \cite{RickmanQR}.

Given $1\leq p<\infty$ and a family $\Gamma$ of paths in a metric measure space $(X, d, \mu)$, the $p$-modulus of $\Gamma$ is defined to be 
\[
\modd_p\Gamma:=\inf_\rho\int_X\rho^p\ d\mu,
\]
where the infimum is taken over all admissible functions of $\Gamma$, i.e., Borel measurable functions $\rho: X\rightarrow [0, \infty]$ that satisfy
\[
\int_\gamma\rho\ ds\geq 1
\]
for all locally rectifiable $\gamma\in\Gamma$. If no admissible functions exist, the modulus is defined to be $\infty$.
The definition of modulus can be generalized considerably, as was done by Fuglede in his 1957 paper \cite{Fuglede1957}. For example, instead of paths we can consider surfaces by defining the modulus with exactly the same formula but requiring the admissible functions to satisfy
\[
\int_S\rho\ d\sigma_S\geq 1
\]
for all surfaces $S$ in the family. Here $\sigma_S$ denotes some Borel-measure associated to $S$. In our applications $\sigma_S$ will be comparable to a Hausdorff measure restricted to $S$. 

Our main result is concerned with Ahlfors $Q$-regular complete metric spaces that support a weak 1-Poincar\'e inequality. We also assume $Q>1$. See Section \ref{section:definitions} for all relevant definitions. Fix such a metric measure space $(X, d, \mu)$. Given a domain $G\subset\subset X$ and disjoint nondegenerate continua $E, F\subset G$ we denote by $\Gamma(E, F; G)$ the family of rectifiable paths in $G$ that join $E$ and $F$. Similarly, we denote by $\Gamma^*(E, F; G)$ the family of compact sets $S\subset \overline G$ that have finite $(Q-1)$-dimensional Hausdorff measure and separate $E$ and $F$ in $G$. By separation we mean that $E$ and $F$ belong to disjoint components of $G-S$. We equip each surface $S$ with the restriction of the $(Q-1)$-dimensional Hausdorff measure on $S\cap G$. For $1<p<\infty$, denote $p^*=\frac{p}{p-1}$.

The main purpose of this paper is to prove the following connection between the path modulus and the modulus of separating surfaces.
\begin{theorem}\label{thm:duality}
Let $1<p<\infty$. There is a constant $C$ that depends only on the data of $X$ such that 
\begin{equation}\label{eq:dualityintro}
\frac{1}{C}\leq\modd_p\Gamma(E, F; G)^\frac{1}{p} \cdot \modd_{p^*}\Gamma^*(E, F; G)^\frac{1}{p^*}\leq C,
\end{equation}
for any choice of $E, F$ and $G$. Here it is understood that $0\cdot\infty=1$.
\end{theorem}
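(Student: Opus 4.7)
The plan is to prove the two inequalities in \eqref{eq:dualityintro} separately, using in both directions a coarea-type inequality available in Ahlfors $Q$-regular PI spaces, together with the standard identification of the $p$-modulus with a $p$-capacity.

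\emph{Lower bound (the easier direction).} I would start from arbitrary admissible functions $\rho$ for $\Gamma(E,F;G)$ and $\rho^*$ for $\Gamma^*(E,F;G)$. Define the potential
\[
u(x) := \min\Bigl\{1,\ \inf_{\gamma} \int_\gamma \rho\, ds\Bigr\},
\]
where the infimum runs over rectifiable paths in $G$ from $E$ to $x$. Then $u = 0$ on $E$, $u = 1$ on $F$, and $\rho$ is an upper gradient of $u$. For a.e.\ $t \in (0,1)$ the level set $S_t := u^{-1}(t) \cap \overline{G}$ is compact and separates $E$ from $F$, and a version of Eilenberg's coarea inequality in Ahlfors regular spaces gives
\[
\int_0^1 \int_{S_t} \rho^*\, d\haus^{Q-1}\, dt \ \leq\ C \int_X \rho^*\, \rho\, d\mu.
\]
In particular $\haus^{Q-1}(S_t) < \infty$ for a.e.\ $t$, so $S_t \in \Gamma^*(E,F;G)$ and admissibility gives $\int_{S_t} \rho^*\, d\haus^{Q-1} \geq 1$. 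Integrating in $t$ and applying Hölder yields $1 \leq C\|\rho\|_p\|\rho^*\|_{p^*}$, and infimizing produces the lower bound.

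\emph{Upper bound (the hard direction).} Here I would exhibit admissible functions whose norms nearly realize the moduli. The natural candidate is the extremal $u \in N^{1,p}(X)$ for the $p$-capacity of the triple $(E, F; G)$; existence follows from the reflexivity of $N^{1,p}(X)$ under our assumptions, and its minimal $p$-weak upper gradient $g_u$ is, up to constants, an admissible function for $\Gamma(E,F;G)$ with $\|g_u\|_p^p \asymp \modd_p \Gamma(E,F;G)$. For the conjugate family I would take
\[
\rho^*\ :=\ c^{-1}\, g_u^{\,p-1},
\]
normalized so that $\|\rho^*\|_{p^*}^{p^*} = c^{-p^*} \|g_u\|_p^p \asymp c^{-p^*}\,\modd_p \Gamma$. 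Admissibility of $\rho^*$ would follow from a uniform lower bound
\[
\int_S g_u^{\,p-1}\, d\haus^{Q-1} \ \geq\ c \qquad \text{for every } S \in \Gamma^*(E,F;G),
\]
and choosing $c \asymp (\modd_p\Gamma)^{1/p^*}$ would then give the desired upper bound after rearrangement. To prove this lower bound I would first establish it on the canonical separating surfaces --- the level sets of the extremal $u$ --- using a two-sided coarea estimate (Eilenberg from above, a reverse estimate coming from the minimality of $u$ and the 1-Poincaré inequality from below), and then propagate the estimate from level sets to arbitrary separating compacta via a slicing/foliation argument based on relative isoperimetry.

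\emph{Main obstacle.} The crux is the last step: in $\R^n$, Ziemer uses that the field $|\nabla u|^{p-2}\nabla u$ is divergence-free and has a topologically prescribed unit flux through every surface in $\Gamma^*$, which immediately yields the lower bound. In the metric setting there is no such vector-field formalism. I expect the proof to replace this by a genuinely measure-theoretic argument, combining the structure of the $p$-capacity extremal with the Loewner property (implied by Ahlfors $Q$-regularity plus the 1-Poincaré inequality) to force a nontrivial intersection of every separating compactum with the essential flow of $g_u$. A plausible route is to pass through a discrete max-flow/min-cut duality on a sequence of network approximations of $X$, where the duality is combinatorial, and then use quantitative coarea and Poincaré estimates to take the limit while controlling the constants.
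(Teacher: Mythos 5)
Your lower-bound plan matches the paper's Section~\ref{section:lowerbound} in spirit: build a potential from an admissible path-density, observe that its level sets separate, and feed an admissible surface-density through a coarea inequality and H\"older. One technical point you gloss over: the $u$ you define as an infimum of path integrals of $\rho$ is only a Newtonian (Sobolev-type) function, not locally Lipschitz, while the coarea estimate of Proposition~\ref{prop:coarea} is stated and proved for \emph{locally Lipschitz} $u$ (its proof uses Lipschitz extension of $u$ on sub-level sets of the maximal function). The paper resolves this by invoking the Kallunki--Shanmugalingam identity $\modd_p\Gamma=\capp_p^L\Gamma$, which lets one restrict the competition to locally Lipschitz potentials from the outset; you would need to cite something equivalent or carry out an approximation.

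For the upper bound your plan diverges sharply from what the paper does, and here the divergence matters. You correctly identify that Ziemer's Euclidean argument relies on a divergence/flux identity (the field $|\nabla u|^{p-2}\nabla u$ has prescribed unit flux through every separating surface) and that this has no obvious metric analogue. But the route you propose as a substitute --- proving the flux-type lower bound $\int_S g_u^{p-1}\, d\Ha \geq c$ via a discrete max-flow/min-cut approximation --- is left entirely heuristic, and that is precisely the hard step; as written this is a gap, not a proof. The paper avoids the flux problem altogether by \emph{reversing the direction of the duality}: instead of starting from the path-capacity extremal $u$ and trying to verify admissibility of $g_u^{p-1}$ for $\Gamma^*$, it starts from the \emph{surface}-modulus extremal $\rho_j$ (existence by Fuglede--Mazur, Lemma~\ref{lemma:minimizer}) and verifies that $C(\modd_{p^*}\Gamma_j^*)^{-1}\rho_j^{p^*-1}$ is admissible for the \emph{path} family $\Gamma$. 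The mechanism for this is: (i) for any fixed $\gamma\in\Gamma$, the relative isoperimetric inequality (Lemma~\ref{lemma:ipm}, where the $1$-Poincar\'e hypothesis enters) yields admissible surface-densities $\phi_j^n$ for $\Gamma_j^*$ supported in Whitney-type balls along $|\gamma|$; (ii) the first-order optimality condition at $\rho_j$ (Lemma~\ref{lemma:variation}) gives $\modd_{p^*}\Gamma_j^*\leq\int\phi_j^n\rho_j^{p^*-1}\,d\mu$; (iii) letting $n\to\infty$ converts the right-hand side into a line integral of $\rho_j^{p^*-1}$ along $\gamma$ via maximal-function estimates and Fuglede's lemma. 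The payoff of the paper's direction is that the admissibility check becomes local along a single path, where the isoperimetric inequality does the work, rather than global over all separating compacta, where one would need a topological flux argument. If you want to rescue your direction, you would essentially have to reconstruct a metric degree theory; the paper's reversal is the more economical idea and is worth internalizing.
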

Gehring \cite{Gehring1962} and Ziemer \cite{Ziemer1967} proved that \eqref{eq:dualityintro} holds in $\R^n$ with $C=1$.

As an application of Theorem \ref{thm:duality} we find a new characterisation for quasiconformal maps between regular spaces. Let $Y$ be another complete Ahlfors $Q$-regular space that supports a weak 1-Poincar\'e inequality. Recall that a homeomorphism $f: X\rightarrow Y$ is (geometrically) $K$-\emph{quasiconformal} if there exists a constant $K\geq 1$ such that for every family $\Gamma$ of paths in $X$
\begin{equation}\label{eq:pathintro}
\frac{1}{K}\modd_Q(f\Gamma)\leq\modd_Q\Gamma\leq K\modd_Q(f\Gamma).
\end{equation}
Here $f\Gamma=\{f\circ\gamma\ |\ \gamma\in\Gamma\}$. 

\begin{corollary}\label{cor:intro}
Let $X$ and $Y$ be as above. A homeomorphism $f: X\rightarrow Y$ is $K$-quasiconformal if and only if there is a constant $C$, such that
\[
\frac{1}{C}\modd_{Q^*}\Gamma^*(E, F; G)\leq\modd_{Q^*}\Gamma^*(fE, fF; fG)\leq C\modd_{Q^*}\Gamma^*(E, F; G)
\]
for all $E, F$ and $G$ as above. The constants $C$ and $K$ depend only on each other and the data of $X$ and $Y$. 
\end{corollary}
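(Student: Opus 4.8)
The plan is to derive the corollary directly from Theorem \ref{thm:duality} together with the definition of quasiconformality, using that $Q^* = \frac{Q}{Q-1}$ is the conjugate exponent of $Q$. The key observation is that the two-sided bound in \eqref{eq:dualityintro}, applied both in $X$ and in $Y$, lets us trade information about $\modd_{Q^*}$ of separating-surface families for information about $\modd_Q$ of the dual connecting-path families $\Gamma(E,F;G)$, up to multiplicative constants depending only on the data.

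\emph{From quasiconformality to surface-modulus bounds.} Suppose $f$ is $K$-quasiconformal. Fix $E,F,G$ and abbreviate $\Gamma = \Gamma(E,F;G)$, $\Gamma^* = \Gamma^*(E,F;G)$, and similarly with $f$-images. Since $f$ is a homeomorphism carrying rectifiable paths joining $E,F$ in $G$ to rectifiable paths joining $fE, fF$ in $fG$ (rectifiability is not automatically preserved, but one compares $\modd_Q$ of the \emph{whole} family $\Gamma$ with $\modd_Q(f\Gamma)$ via \eqref{eq:pathintro}, so $f\Gamma$ need not equal $\Gamma(fE,fF;fG)$ exactly; here I would invoke the standard fact, available in this setting, that $\modd_Q(f\Gamma(E,F;G)) = \modd_Q \Gamma(fE,fF;fG)$ because non-rectifiable paths have zero contribution and $f$ is a homeomorphism between the relevant domains), we get
\[
\frac{1}{K}\,\modd_Q\Gamma(fE,fF;fG) \le \modd_Q\Gamma(E,F;G) \le K\,\modd_Q\Gamma(fE,fF;fG).
\]
Now apply Theorem \ref{thm:duality} in $X$ with $p = Q$ (so $p^* = Q^*$): $\modd_{Q^*}\Gamma^* \asymp \modd_Q\Gamma^{-(Q^*-1)}$, i.e. $\modd_{Q^*}\Gamma^* \cdot (\modd_Q\Gamma)^{Q^*/Q}$ is bounded above and below by constants depending only on the data of $X$ (with the convention $0\cdot\infty=1$ handling the degenerate cases). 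Applying the analogous statement in $Y$ for $fE,fF,fG$ and combining the three displays gives
\[
\frac{1}{C}\,\modd_{Q^*}\Gamma^*(E,F;G) \le \modd_{Q^*}\Gamma^*(fE,fF;fG) \le C\,\modd_{Q^*}\Gamma^*(E,F;G),
\]
with $C$ depending only on $K$ and the data of $X$ and $Y$. This is the desired conclusion in one direction.

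\emph{From surface-modulus bounds to quasiconformality.} For the converse, assume the two-sided surface-modulus inequality with constant $C$ holds for all $E,F,G$. Running the same chain of estimates backwards — Theorem \ref{thm:duality} in $X$ and $Y$ to convert surface moduli to path moduli of connecting families — we obtain a constant $K_0$ depending on $C$ and the data with
\[
\frac{1}{K_0}\,\modd_Q\Gamma(fE,fF;fG) \le \modd_Q\Gamma(E,F;G) \le K_0\,\modd_Q\Gamma(fE,fF;fG)
\]
for every choice of disjoint nondegenerate continua $E,F$ in every domain $G\subset\subset X$. The remaining, and genuinely substantive, step is to upgrade this control over \emph{connecting} families $\Gamma(E,F;G)$ to control over \emph{arbitrary} path families $\Gamma$ as required by \eqref{eq:pathintro}, i.e. to verify that $f$ is $K$-quasiconformal with $K$ depending only on $K_0$ and the data. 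Here I would appeal to the known characterization of quasiconformality in Ahlfors-regular Loewner (or weak $1$-Poincaré) spaces: such spaces are $Q$-Loewner, and on a Loewner space the $Q$-modulus of connecting families of continua controls the $Q$-modulus of all path families via the metric definition of quasiconformality and the equivalence of the geometric, metric, and analytic definitions in this category (as in Heinonen–Koskela and the Heinonen–Koskela–Shanmugalingam–Tyson framework). Concretely, a bound on $\modd_Q\Gamma(E,F;G)$ ratios for all $E,F,G$ yields a bound on the quasisymmetric distortion, or directly on the linear dilatation $H(x,f)$, which then gives geometric quasiconformality.

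\emph{Main obstacle.} The routine part is the algebra of conjugate exponents and the bookkeeping of constants through Theorem \ref{thm:duality}. The real work is this last implication: \emph{connecting-family modulus control $\Rightarrow$ full geometric quasiconformality}. This requires the Loewner property of $X$ and $Y$ (guaranteed by completeness, Ahlfors $Q$-regularity with $Q>1$, and the weak $1$-Poincaré inequality) and an argument — essentially a covering/chaining argument exploiting the Loewner function — showing that every path family can be handled by reducing to families of the special type $\Gamma(E,F;G)$. I expect this to be the paragraph of the proof that demands care, and it is where I would cite the equivalence-of-definitions theory rather than reprove it.
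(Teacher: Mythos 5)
Your proposal is correct and follows essentially the same route as the paper's proof: the ``only if'' direction is a direct combination of the two-sided duality of Theorem \ref{thm:duality} applied in $X$ and $Y$ with the definition of quasiconformality, and for the ``if'' direction you correctly identify that the substantive step is upgrading modulus control on connecting families $\Gamma(E,F;G)$ to full geometric quasiconformality via the Loewner/quasisymmetry machinery. The paper pins this down by injecting the estimate into the proof of Theorem 4.7 of Heinonen--Koskela to get local quasisymmetry and then citing Theorem 10.9 of Tyson to pass back to the geometric definition (with the Loewner and linear local connectedness properties supplied by Korte and Heinonen--Koskela), which is precisely the equivalence-of-definitions theory you gesture at.
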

See Section \ref{section:results} for the proof. We remark that the ``only if" part follows also from the recent work of Jones, Lahti and Shanmugalingam \cite{JonesLahtiShan2018}.

This paper is organized as follows: In Section \ref{section:definitions} we introduce the main tools for later use. In Section \ref{section:results} we state our main results, Theorems \ref{thm:lowerbound} and \ref{thm:upperbound}, which are more general versions of the lower and upper bounds in \eqref{eq:dualityintro}. We also show how these results imply Corollary \ref{cor:intro}. Theorem \ref{thm:lowerbound} is proved in Section \ref{section:lowerbound} along the lines of \cite{Gehring1962} and \cite{Ziemer1967}, applying coarea estimates. The proof of Theorem \ref{thm:upperbound}, which seems to be new even in the euclidean setting, is given in Section \ref{section:upperbound}. Section \ref{section:examples} contains an example showing the necessity of the $1$-Poincar\'e inequality in Theorem \ref{thm:duality}. 

\vskip 10pt
\noindent
{\bf Acknowledgement.}
We are grateful to Panu Lahti and Nageswari Shanmugalingam for pointing out an error in an earlier version of the manuscript. 



\section{Preliminaries}\label{section:definitions}
\subsection{Doubling measures}
A Borel-regular measure $\mu$ is called \emph{doubling} with doubling constant $C_\mu>1$ if 
\begin{equation}\label{eq:doubling}
0< \mu(2B)\leq C_\mu\mu(B) <\infty 
\end{equation}
for all balls $B\subset X$. Iterating (\ref{eq:doubling}) shows that there are constants $C_\mu'$ and $s>0$ that depend only on $C_\mu$ such that for any $x, y\in X$ and $0<r\leq R<
\diam(X)$ with $x\in B(y, R)$,
\begin{equation}\label{eq:doubling2}
\frac{\mu(B(y, R))}{\mu(B(x, r))}\leq C'_\mu\left(\frac{R}{r}\right)^s.
\end{equation}
In fact, we can choose $s\geq \log_2C_\mu$.

The space $X$ is said to be \emph{Ahlfors $Q$-regular}, or just $Q$-regular, if there are constants $a$ and $A>0$ such that 
\begin{equation}\label{eq:ahlforsreg}
ar^Q\leq\mu(B(x, r))\leq Ar^Q
\end{equation}
for every $x\in X$ and $0<r<\diam(X)$. It follows immediately from the definitions that $Q$-regular spaces are doubling.

\subsection{Moduli}
Let $\mathscr{M}$ be a set of Borel-regular measures on $X$ and let $1\leq p<\infty$. We define the $p$-\emph{modulus} of $\mathscr{M}$ to be
\[
\modd_p\mathscr{M}=\inf\int_X\rho^p\ d\mu,
\]
where the infimum is taken over all Borel measurable functions $\rho: X\rightarrow [0, \infty]$ with 
\begin{equation}\label{eq:modulusehto}
\int_X\rho\ d\nu\geq 1
\end{equation}
for all $\nu\in \mathscr{M}$. Such functions are called \emph{admissible functions of $\mathscr{M}$}. If there are no admissible functions we define the modulus to be infinite. If $\rho$ is an admissible function for $\mathscr{M}-\mathscr{N}$ where $\mathscr{N}$ has zero $p$-modulus, we say that $\rho$ is \emph{$p$-weakly admissible} for $\mathscr{M}$. As a direct consequence of the definitions we see that the $p$-modulus does not change if the infimum is taken over all $p$-weakly admissible functions. If some property holds for all $\nu\in \mathscr{M}-\mathscr{N}$ we say that it holds for $p$-\emph{almost every} $\nu$ in $\mathscr{M}$.

We can also use paths instead of measures; if $\Gamma$ is a family of locally rectifiable paths in $X$ we define the \emph{path p-modulus} of $\Gamma$ as before with
\[
\modd_p\Gamma=\inf\int_X\rho^p\ d\mu,
\]
but require that 
\[
\int_\gamma\rho\ ds\geq 1
\]
for every locally rectifiable $\gamma\in\Gamma$. See \cite{Vaisala} or \cite{GMT} for the definition and properties of path integrals over locally rectifiable paths. Most of the path families considered in this paper will be of the form
\[
\Gamma(E, F; G):=\{\text{paths that join } E \text{ and } F \text { in } G \},
\]
where $E, F\subset G$ are disjoint continua and $G$ is a domain in $X$. The modulus of $\Gamma(E, F; G)$ does not change if we consider only injective paths, see \cite[Proposition 15.1]{Semmes1996b}. For injective paths 
\[
\int_\gamma\rho\ ds = \int_{|\gamma|}\rho\ d\Ha^1,
\]
as can be seen from the area formula \cite[2.10.13]{GMT}. This implies that the modulus of any subfamily of $A$ of $\Gamma(E, F; G)$ is the same as the modulus of the family
\[
\{ \Ha^1\mres |\gamma|\ |\ \gamma\in A\},
\]
so in this sense the two definitions of the modulus are equal. 

We will need the following basic lemma in multiple occasions. It is a combination of the lemmas of Fuglede and Mazur, see \cite[p. 19, 131]{HKST}.
\begin{lemma}\label{lemma:minimizer}
Let $\mathscr{M}$ be a set of Borel measures on $X$ and $1<p<\infty$. Suppose $\modd_p\mathscr{M}<\infty$. Then there is a sequence $(\rho_i)_{i=1}^\infty$ of admissible functions of $\mathscr{M}$ that converges in $L^p(X)$ to a $p$-weakly admissible function $\rho$ of $\mathscr{M}$ such that for $p$-almost every $\nu\in \mathscr{M}$
\begin{equation}\label{eq:mini3}
\int_X\rho_i\ d\nu\rightarrow\int_X\rho\ d\nu<\infty
\end{equation}
and
\begin{equation}\label{eq:mini2}
\modd_p\mathscr{M}=\int_X\rho^p\ d\mu.
\end{equation}
\end{lemma}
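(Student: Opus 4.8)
The plan is to combine the standard $L^p$-minimization argument with Fuglede's lemma, following the treatment in \cite{HKST}. First I would reduce to considering a minimizing sequence: since $\modd_p\mathscr{M}<\infty$, pick admissible functions $\rho_i'$ with $\int_X(\rho_i')^p\,d\mu\to\modd_p\mathscr{M}$. The set of admissible functions is convex, and so is the nonnegative cone they generate; because $1<p<\infty$, the space $L^p(X)$ is uniformly convex, hence reflexive and strictly convex. The closure (in $L^p$) of the convex hull of the admissible functions is a closed convex set, on which the strictly convex functional $\rho\mapsto\|\rho\|_{L^p}^p$ attains a unique minimizer $\rho$. This is precisely Mazur's lemma packaged with the Radon--Riesz/uniform convexity property: one extracts a weakly convergent subsequence of $(\rho_i')$, and since the norms converge to the infimum of the norm on the weak limit (by weak lower semicontinuity plus the minimizing property), uniform convexity upgrades weak convergence to strong convergence in $L^p$. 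Relabel this strongly convergent sequence as $(\rho_i)$ and call its limit $\rho$; then \eqref{eq:mini2} holds, $\rho\ge 0$ a.e., and $\|\rho_i-\rho\|_{L^p}\to 0$.

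Next I would invoke Fuglede's lemma (as stated in \cite[p.~131]{HKST}) in the form adapted to the measure family $\mathscr{M}$: if $\rho_i\to\rho$ in $L^p(X)$, then there is a subsequence (still denoted $\rho_i$) such that
\[
\int_X |\rho_i-\rho|\,d\nu\to 0
\]
for $p$-almost every $\nu\in\mathscr{M}$, i.e., outside a subfamily $\mathscr{N}$ of zero $p$-modulus. For such $\nu$ this gives \eqref{eq:mini3}, and in particular $\int_X\rho\,d\nu=\lim_i\int_X\rho_i\,d\nu\ge 1$ since each $\rho_i$ is admissible. Hence $\rho$ is admissible for $\mathscr{M}-\mathscr{N}$, which is exactly the definition of $p$-weak admissibility. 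Passing to a subsequence does not disturb strong $L^p$-convergence nor the identity \eqref{eq:mini2}, so all three conclusions hold simultaneously for the final sequence.

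The only genuine subtlety is Fuglede's lemma itself, but this is precisely the cited result and I would not reprove it: the point is that an $L^p$-convergent sequence has a subsequence converging fast enough that the series $\sum_i\|\rho_{i}-\rho\|_{L^p}$ (along a rapidly chosen subsequence) is finite, and then the function $g=\sum_i|\rho_{i}-\rho|$ lies in $L^p$, so the family of measures $\nu$ with $\int_X g\,d\nu=\infty$ has zero $p$-modulus (testing with $g/\lambda$ shows admissibility for that family fails only on a modulus-zero set); for all other $\nu$, $\int_X|\rho_{i}-\rho|\,d\nu\to 0$ by the tail of the convergent series. The mild care needed is just bookkeeping: one first passes to the subsequence forced by uniform convexity (for strong $L^p$-convergence and \eqref{eq:mini2}), and then within that subsequence passes again to the one furnished by Fuglede's lemma (for \eqref{eq:mini3}); since every subsequence of an $L^p$-convergent sequence is $L^p$-convergent to the same limit with the same limiting norm, nothing is lost.
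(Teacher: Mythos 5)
Your proof is correct in substance and reaches the same conclusion, but it uses a slightly different mechanism for the first half than the paper intends. The paper's proof is just a citation ``combination of the lemmas of Fuglede and Mazur,'' where \emph{Mazur's lemma} is the statement that a weakly convergent sequence has convex combinations converging strongly; one then observes that convex combinations of admissible functions are admissible, so the strongly convergent sequence stays in the admissible class, and Fuglede does the rest. You instead invoke the Radon--Riesz property of the uniformly convex space $L^p$ (weak convergence plus convergence of norms implies strong convergence) to upgrade the original minimizing (sub)sequence directly, with no need to pass to convex combinations. Both routes are standard and valid for $1<p<\infty$, and the Fuglede step you describe (pick a rapidly convergent subsequence so $\sum_i\lVert\rho_i-\rho\rVert_{L^p}<\infty$, set $g=\sum_i|\rho_i-\rho|\in L^p$, and observe the exceptional measures are exactly those with $\int g\,d\nu=\infty$, a zero-modulus family) is exactly right. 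The one place you are a little too terse is the justification that the norm of the weak limit actually \emph{equals} $(\modd_p\mathscr{M})^{1/p}$ rather than merely being $\leq$ it: weak lower semicontinuity gives only the upper bound, and the matching lower bound needs the observation that the set $\mathcal A$ of admissible functions is already convex, so its closed convex hull is just $\overline{\mathcal A}$, and $\inf_{\overline{\mathcal A}}\lVert\cdot\rVert_p=\inf_{\mathcal A}\lVert\cdot\rVert_p$ by continuity of the norm; since the weak limit lies in $\overline{\mathcal A}$, its norm is $\geq$ that common infimum. With that line filled in, the Radon--Riesz hypothesis ``norms converge to the norm of the weak limit'' is verified, and the argument closes. (Your aside about the nonnegative cone is irrelevant and can be dropped.)
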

\begin{remark}
Lemma \ref{lemma:minimizer} holds for the path modulus of a path family $\Gamma$ with the obvious modification of replacing (\ref{eq:mini3}) with 
\[
\int_\gamma\rho_i\ ds\rightarrow\int_\gamma\rho\ ds<\infty
\]
for all $\gamma\in\Gamma$.
\end{remark}

\subsection{Upper gradients}
A Borel function $\rho: X\rightarrow [0, \infty]$ is an \emph{upper gradient} of a function $u: X\rightarrow\overline\R$, if
\begin{equation}\label{eq:ug}
|u(\gamma(1))-u(\gamma(0))|\leq \int_\gamma\rho\ ds
\end{equation}
for all rectifiable paths $\gamma: [0, 1]\rightarrow X$. If $|u(\gamma(0))|$ or $|u(\gamma(1))|$ equal $\infty$, we agree that the left side of (\ref{eq:ug}) equals $\infty$. If (\ref{eq:ug}) fails only for a family of paths of zero $p$-modulus, we say that $\rho$ is a $p$-\emph{weak} upper gradient. 
The following lemma will be useful in the sequel, and will be used without further mention. It allows the use of weak upper gradients in place of upper gradients in all the relevant results used in this paper. This is Proposition 6.2.2 of \cite{HKST}. 
\begin{lemma}
If $u: X\rightarrow \R$ has a $p$-weak upper gradient $\rho\in L^p(X)$ in $X$, then there is a decreasing sequence $(\rho_k)_{k=1}^\infty$ of upper gradients of $u$ that converges to $\rho$ in $L^p(X)$.
\end{lemma}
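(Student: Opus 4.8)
The plan is to produce the approximating sequence in closed form as $\rho_k := \rho + g/k$, where $g \in L^p(X)$ is a fixed nonnegative Borel function that is infinite along all the ``bad'' paths. First I would record the standard description of exceptional families: a family $\Gamma$ of rectifiable paths has $\modd_p\Gamma = 0$ if and only if there is a nonnegative Borel function $g \in L^p(X)$ with $\int_\gamma g\,ds = \infty$ for every $\gamma \in \Gamma$. The ``if'' direction is immediate, since $\e g$ is then an admissible function of $\Gamma$ for each $\e > 0$, whence $\modd_p\Gamma \le \e^p\!\int_X g^p\,d\mu \to 0$. For the converse, using $\modd_p\Gamma = 0$ choose admissible functions $g_j$ of $\Gamma$ with $\int_X g_j^p\,d\mu < 2^{-jp}$ and set $g := \sum_{j=1}^\infty g_j$; then $g$ is Borel, $\|g\|_{L^p} \le \sum_j \|g_j\|_{L^p} < 1$ by Minkowski's inequality, and $\int_\gamma g\,ds = \sum_j \int_\gamma g_j\,ds = \infty$ for every $\gamma \in \Gamma$.

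I would then apply this to $\Gamma$, the family of rectifiable paths $\gamma \colon [0,1] \to X$ for which the upper gradient inequality \eqref{eq:ug} fails for $\rho$; by the definition of a $p$-weak upper gradient we have $\modd_p\Gamma = 0$, so such a $g$ exists. Put $\rho_k := \rho + g/k$. Each $\rho_k$ is a nonnegative Borel function, the sequence is decreasing since $g/k \downarrow 0$, and $\|\rho_k - \rho\|_{L^p} = \|g\|_{L^p}/k \to 0$. Finally each $\rho_k$ is a genuine upper gradient of $u$: if $\gamma \notin \Gamma$ then $\int_\gamma \rho_k\,ds \ge \int_\gamma \rho\,ds \ge |u(\gamma(1)) - u(\gamma(0))|$, while if $\gamma \in \Gamma$ then $\int_\gamma \rho_k\,ds \ge \tfrac1k \int_\gamma g\,ds = \infty$, so \eqref{eq:ug} holds trivially. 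This is the required sequence.

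There is no real obstacle here; the only points needing a moment's care are checking that the countable sum defining $g$ is again Borel and $p$-integrable (Minkowski's inequality together with the geometric series $\sum_j 2^{-j}$), and confirming that the exceptional set of paths appearing in the definition of a $p$-weak upper gradient is literally a path family whose $p$-modulus is computed by the same formula used in the characterisation above. Both are immediate from the definitions in Section~\ref{section:definitions}, the latter because every rectifiable path $[0,1]\to X$ is in particular locally rectifiable.
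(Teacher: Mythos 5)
Your proof is correct. The paper does not give its own proof of this lemma but simply cites it as Proposition 6.2.2 of \cite{HKST}; your argument, using the standard characterization of zero-modulus path families via a nonnegative Borel function $g \in L^p(X)$ with $\int_\gamma g\, ds = \infty$ on every exceptional path, and then setting $\rho_k = \rho + g/k$, is precisely the standard proof.
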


\subsection{Maximal functions}
Suppose $\mu$ is doubling and $R>0$. The restricted Hardy-Littlewood maximal function $\mathcal{M}_Ru$ of an integrable function $u: X\rightarrow \overline\R$ is defined as
\[
\mathcal{M}_Ru(x)=\sup_{0<r\leq R}\dashint_{B(x, r)}|u|\ d\mu,
\]
where 
\[
\dashint_{B}v\ d\mu := \frac{1}{\mu(B)}\int_Bv\ d\mu.
\]
The Hardy-Littlewood maximal function $\mathcal{M}u$ can then be defined as 
\[
\mathcal{M}u=\sup_{R>0}\mathcal{M}_Ru.
\]
In doubling spaces $\mathcal{M}u$ is Borel measurable whenever $u$ is, and the assignment $u\mapsto \mathcal{M}u$ defines a bounded operator $L^p(X)\rightarrow L^p(X)$ for any $1<p<\infty$, with bound depending only $p$ and the doubling constant of $X$, see \cite[Chapter 3.5]{HKST} for details.

\subsection{Codimension 1 spherical Hausdorff measure}
Given a Borel-regular measure $\mu$, the \emph{codimension 1 spherical Hausdorff $\delta$-content} of a set $A\subset X$ is defined as 
\[
\Ha_\delta(A):=\inf\sum_i\frac{\mu(B_i)}{r_i},
\]
where the infimum is taken over countable covers $\{B_i\}$ of $A$, and $B_i=B(x_i, r_i)$ for some $x_i\in X$ and $r_i\leq\delta$. The codimension 1 spherical Hausdorff measure of $A$ is then defined to be 
\[
\Ha(A):=\sup_{\delta>0}\Ha_\delta(A).
\]
By the Carath\'eodory construction $\Ha$ is also a Borel-regular measure. If $X$ is $Q$-regular, $Q \geq 1$, and $\mu$ the $Q$-dimensional Hausdorff measure, then $\Ha$ is comparable to the $(Q-1)$-dimensional Hausdorff measure. 
\subsection{Poincar\'e inequalities}
The space $X$ is said to support a \emph{weak} $p$-\emph{Poincar\'e inequality} with constants $C_P$ and $\lambda_P$ if all balls in $X$ have positive and finite measure, and
\[
\dashint_B|u-u_B|\ d\mu\leq C_P\diam(B)\left(\dashint_{\lambda_PB}\rho^p\ d\mu\right)^\frac{1}{p}
\]
for all functions $u\in L^1_{loc}(X)$ and all upper gradients $\rho$ of $u$.

In the sequel we will encounter function-upper gradient pairs $(v, \rho_v)$ that are defined only on some open and connected set $G\subset X$. For such pairs the Poincar\'e inequality can be applied on any ball $B$ with $\lambda_PB\subset G$, or $B\subset\subset G$ if $\lambda_P=1$. To see this, let $c>1$ be such that $cB\subset G$ and replace $v$ with $v'=v\chi_{cB}$ and $\rho_v$ with $\rho'=\rho_v\chi_{B}+\infty\chi_{X-B}$. Then $\rho'$ is an upper gradient of $v'$ and $v'$ is locally integrable on $X$.
\subsection{Whitney-type coverings}
We will need the following modification of Lemma 4.1.15 in \cite{HKST} in multiple occasions. Here we assume that $(X, d, \mu)$ is a doubling metric measure space, $\Omega\subset X$ is open and bounded and $X-\Omega$ is nonempty.
\begin{lemma}\label{lemma:whitneypeite}
Given any subset $A\subset \Omega$ and natural number $n\geq 2$, there exists a countable collection $\mathcal{B}=\{B(x_i, r_i)\}$ of balls in $\Omega$, such that 
\begin{enumerate}
\item[(i)] $x_i\in A$ and $r_i=\frac{1}{2n}d(x_i, X-\Omega)$ for all $i$
\item[(ii)] If $B_i, B_j\in\mathcal{B}$ intersect, then 
\[
\frac{1}{2}\leq \frac{r_i}{r_j}\leq 2
\]
\item[(iii)] For all $x\in \Omega$
\[
\chi_A(x)\leq \sum_{B\in \mathcal{B}}\chi_{2B}(x)\leq C,
\]
where $C$ depends only on the doubling constant of $\mu$.
\end{enumerate}
\end{lemma}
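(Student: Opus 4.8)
The plan is to reduce everything to the basic $5r$-covering theorem \cite{HKST}. For $x\in A$ put $r(x)=\frac{1}{2n}\dist(x,X-\Omega)$; this is positive because $\Omega$ is open, and bounded above uniformly in $x$ because $A$ lies in the bounded set $\Omega$ while $X-\Omega$ is nonempty. Apply the covering theorem to the family $\{B(x,r(x)/5):x\in A\}$ to extract a countable pairwise disjoint subfamily $\{B(x_i,r(x_i)/5)\}_i$ whose $5$-dilates cover $\bigcup_{x\in A}B(x,r(x)/5)$, hence cover $A$. Set $r_i=r(x_i)$ and $B_i=B(x_i,r_i)$, so that $\mathcal{B}=\{B_i\}$ is countable and satisfies (i); and since $5\cdot B(x_i,r_i/5)=B_i$ we get $A\subset\bigcup_iB_i$, which already gives the first inequality in (iii) (even without dilating). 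Finally $n\ge 2$ forces $2r_i=\frac1n\dist(x_i,X-\Omega)\le\frac12\dist(x_i,X-\Omega)$, so $2B_i\subset\Omega$; in particular each $B_i\subset\Omega$.

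For (ii), if $B_i$ and $B_j$ intersect then $d(x_i,x_j)<r_i+r_j$. Feeding this into the $1$-Lipschitz estimate $|\dist(x_i,X-\Omega)-\dist(x_j,X-\Omega)|\le d(x_i,x_j)$ together with $r_i=\frac1{2n}\dist(x_i,X-\Omega)$ and $r_j=\frac1{2n}\dist(x_j,X-\Omega)$, a one-line computation gives $r_i<\frac{2n+1}{2n-1}\,r_j$, which for $n\ge 2$ is at most $\frac53 r_j<2r_j$; by symmetry $\frac12\le r_i/r_j\le 2$.

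For the second inequality in (iii), fix $x\in\Omega$ and let $I=\{i:x\in 2B_i\}$; we must bound $|I|$ by a constant depending only on $C_\mu$. For $i,j\in I$ we have $d(x_i,x_j)<2r_i+2r_j$, and the computation of the previous paragraph (with $2r$ in place of $r$) now gives $r_i<\frac{n+1}{n-1}r_j\le 3r_j$. Thus, choosing any $i_0\in I$ and writing $\rho=r_{i_0}$, every $r_i$ with $i\in I$ lies in $(\rho/3,3\rho)$, and $d(x,x_i)<2r_i<6\rho$. Consequently the balls $B(x_i,\rho/15)\subset B(x_i,r_i/5)$, $i\in I$, are pairwise disjoint and contained in $B(x,7\rho)$, while $B(x,7\rho)\subset B(x_i,13\rho)$ for each $i\in I$. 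Since $13\rho<2^8\cdot\frac{\rho}{15}$, eight applications of the doubling condition give $\mu(B(x,7\rho))\le C_\mu^{8}\,\mu(B(x_i,\rho/15))$, so summing the disjoint balls inside $B(x,7\rho)$ yields $|I|\,C_\mu^{-8}\,\mu(B(x,7\rho))\le\mu(B(x,7\rho))$, that is $|I|\le C_\mu^{8}$.

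The step I expect to be the real point is keeping the constant in (iii) independent of $n$. A packing argument comparing the small balls $B(x_i,r_i/5)$ (of radius of order $\tfrac1n\dist(x,X-\Omega)$) directly against the macroscopic ball $B(x,\dist(x,X-\Omega))$ would cost a doubling factor of order $n^{\log_2 C_\mu}$; the remedy, used above, is to first observe that all indices in $I$ correspond to balls of a single scale $\rho$, so every ratio of radii entering the doubling estimate is an absolute constant.
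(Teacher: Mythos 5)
Your proof is correct, and the strategy is closely related to the paper's: both rest on the $5r$-covering theorem plus the observation that whenever two of the (dilated) balls share a point their radii are forced to be comparable, which reduces the final bound in (iii) to a single-scale packing count. The difference is organizational. The paper stratifies $A$ into dyadic layers $A_k=\{x:2^{k-1}<d(x,X-\Omega)\le 2^k\}$ first, applies the covering theorem separately inside each layer, and then notes that at any fixed $x$ only boundedly many layers can contribute, so the packing estimate is done layer by layer. You instead run the covering theorem once over all of $A$ and derive the comparability of the radii in $I=\{i:x\in 2B_i\}$ directly from the Lipschitz bound on $d(\cdot,X-\Omega)$; this collapses the family to a single scale $\rho$ and lets you pack in one step. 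Your route is arguably a bit leaner (one invocation of the covering theorem instead of countably many, and no case analysis over neighboring $\mathcal{G}_k$'s). One small remark: the worry you raise in your last paragraph about a naive packing costing a factor $n^{\log_2 C_\mu}$ is real if one forgets to localize to $I$, but neither your argument nor the paper's encounters it, since the comparability of radii (yours) and the dyadic stratification (the paper's) each preempt it. Also note that the pairwise disjoint subfamily produced by the $5r$-covering theorem is automatically countable here because $X$ is doubling, hence separable; it is worth saying this explicitly since you invoke it to get a countable $\mathcal{B}$.
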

\begin{proof}
Let $A\subset \Omega$ and $2\leq n\in \N$. Denote $d(x)=d(x, X-\Omega)$. For any $k\in\Z$ let
\[
A_k=\lbrace x\in A\ |\ 2^{k-1}<d(x)\leq 2^k\rbrace
\]
and
\[
\mathcal{F}_k=\lbrace B(x, d(x)/10n)\ |\ x\in A_k\rbrace.
\]
Apply the $5r$-covering theorem on $\mathcal{F}_k$ to find a countable pairwise disjoint collection $\mathcal{G}_k\subset \mathcal{F}_k$ such that
\[
\bigcup_{B\in \mathcal{F}_k}B\subset \bigcup_{B\in\mathcal{G}_k}5B.
\]
Denote by $\mathcal{B}$ the collection of all balls $5B$ with $B\in\mathcal{G}_k$ for some $k\in\Z$. Then $\mathcal{B}$ is countable and (i) is satisfied. A simple application of the triangle inequality proves (ii). The lower bound of (iii) follows from the definition of $\mathcal{B}$. Let $x\in\Omega$. By (i) and (ii) there is a $k\in\Z$ such that balls $B\in\mathcal{B}$ whose scaled versions $2B$ contain $x$ must come from either $\mathcal{G}_k$ or $\mathcal{G}_{k-1}$. Now let $10B_1, \ldots, 10B_N$ be balls arising from $\mathcal{G}_k$ that contain $x$ with radii $r_1, \ldots, r_N$ respectively, so that $r_1\geq r_i$ for all $i=1, \ldots, N$. By the definition of $\mathcal{G}_k$ the balls $B_i$ are disjoint, so by the doubling property and (ii) 
\[
\mu(11B_1)\geq \sum_{i=1}^N\mu(B_i)\geq CN\mu(11B_1),
\]
where $C$ depends only on the doubling constant of $\mu$. The same argument can be applied to $\mathcal{G}_{k-1}$ and (iii) follows.
\end{proof}
\section{Main results}\label{section:results}
Assume for the rest of the text that $(X, d, \mu)$ is a complete metric measure space that supports a weak $1$-Poincar\'e inequality with constants $C_P$ and $\lambda_P$. Assume also that $\mu$ is Borel-regular and doubling so that it satisfies (\ref{eq:doubling2}) with some $C_\mu$ and $s>1$. Note that the doubling condition implies that $X$ is proper and therefore also separable. By \cite[Part I, II.3.11]{Schwartz1973} $\mu$ is in fact a Radon-measure. 

Fix a domain $G\subset\subset X$ and two disjoint nondegenerate continua $E, F\subset G$. Denote $G'=G-(E\cup F)$. Denote by $\Gamma$ the set of all injective rectifiable paths $\gamma: [0, 1]\rightarrow G$ with $\gamma(0)\in E$ and $\gamma(1)\in F$. For any $1\leq p<\infty$ denote
\begin{equation}\label{eq:polkumod}
\modd_p\Gamma:=\modd_p\lbrace\Ha^1\mres |\gamma| \ |\ \gamma\in\Gamma\rbrace.
\end{equation}
Similarly, denote by $\Gamma^*$ the set of all compact subsets $S\subset \overline G$ that separate $E$ and $F$ in $G$ and have finite $\Ha$-measure in $G$. Abbreviate
\begin{equation}\label{eq:pintamod}
\modd_q\Gamma^*=\modd_q\lbrace \Ha\mres S\cap G\ |\ S\in \Gamma^*\rbrace. 
\end{equation}
The requirement $\Ha(S\cap G)<\infty$ is redundant since the modulus of the family of sets with infinite $\Ha$-measure is zero. Nevertheless we prefer to work with sets of finite $\Ha$-measure. 

We denote $C=C(X)$ if some constant $C>0$ depends only on the data of $X$, i.e., the constants $s, C_\mu, C_P$ and $\lambda_P$. The same symbol $C$ will be used for various different constants. Denote $p^*=\frac{p}{p-1}$ for each $1<p<\infty$. The main results of this paper are the following: 
\begin{theorem}\label{thm:lowerbound}
Let $1<p<\infty$. If $\modd_p\Gamma \neq 0$, then 
\[
C\leq(\modd_p\Gamma)^\frac{1}{p}(\modd_{p^*}\Gamma^*)^\frac{1}{p^*}, 
\]
where the constant $C$ depends only on the data of $X$. If $\modd_p\Gamma =0$, then ${\modd_{p^*}\Gamma^*=\infty}$.
\end{theorem}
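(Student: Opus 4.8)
The plan is to build, from a good admissible function for $\Gamma$ and a good admissible function for $\Gamma^*$, a single test object — concretely a function $u$ on $G'$ that is close to $0$ on $E$ and close to $1$ on $F$ — and then estimate its oscillation in two ways: along paths in $\Gamma$ (which forces an upper gradient of $u$ to integrate to at least something along almost every such path) and across separating surfaces in $\Gamma^*$ (which forces the level sets of $u$ to be expensive in the $\Ha$-sense). The interplay between these two estimates, together with Hölder's inequality, should produce the claimed lower bound on the product of the moduli.

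First I would assume $0<\modd_p\Gamma<\infty$, the remaining case $\modd_p\Gamma=\infty$ being trivial for the inequality and the case $\modd_p\Gamma=0$ being the second assertion, handled separately at the end. Using Lemma~\ref{lemma:minimizer} I fix an optimal $p$-weakly admissible $\rho$ for $\Gamma$ with $\int_X\rho^p\,d\mu=\modd_p\Gamma$, and I would like to define
\[
u(x)=\min\Bigl\{1,\ \inf_{\gamma}\int_\gamma\rho\,ds\Bigr\},
\]
the infimum over rectifiable paths in $G$ from $E$ to $x$; then $\rho$ (extended suitably) is an upper gradient of $u$ on $G'$, $u\equiv 0$ on $E$, and $u\equiv 1$ on $F$ by admissibility, at least $p$-a.e.\ and hence after the usual Fuglede/Mazur cleanup. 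The nontrivial input here is that in a complete space supporting a $1$-Poincaré inequality this Newtonian-type function is genuinely continuous on $G'$ and has well-defined boundary behaviour, so its superlevel sets are open and their topological boundaries $\partial\{u>t\}$ genuinely separate $E$ from $F$ in $G$ for every $0<t<1$.

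Next comes the coarea step, which I expect to be the technical heart. The coarea inequality for the codimension-$1$ Hausdorff measure $\Ha$ (available precisely because $X$ is $Q$-regular and supports a $1$-Poincaré inequality — this is where the hypotheses are really used) gives, for the upper gradient $\rho$ of $u$,
\[
\int_0^1 \Ha\bigl(\partial\{u>t\}\cap G'\bigr)\,dt \ \le\ C\int_{G'}\rho\,d\mu.
\]
Now if $\eta$ is any admissible function for $\Gamma^*$, then for a.e.\ $t\in(0,1)$ the set $S_t=\partial\{u>t\}$ lies in $\Gamma^*$ (after checking $\Ha(S_t\cap G)<\infty$ for a.e.\ $t$, which the coarea bound delivers), so $\int_{S_t}\eta\,d\Ha\ge 1$. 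Combining this with the coarea inequality applied to the product $\rho\eta$ in place of $\rho$ — more precisely, applying coarea to $u$ with the weight $\eta$ integrated against $\Ha$ on the level sets — yields
\[
1\ \le\ \int_0^1\!\!\int_{S_t}\eta\,d\Ha\,dt\ \le\ C\int_{G'}\rho\,\eta\,d\mu\ \le\ C\Bigl(\int_X\rho^p\,d\mu\Bigr)^{1/p}\Bigl(\int_X\eta^{p^*}\,d\mu\Bigr)^{1/p^*},
\]
by Hölder. Taking the infimum over admissible $\eta$ gives $1\le C(\modd_p\Gamma)^{1/p}(\modd_{p^*}\Gamma^*)^{1/p^*}$, which is the desired estimate after absorbing $C$.

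Finally, for the degenerate case: if $\modd_p\Gamma=0$, suppose toward a contradiction that $\modd_{p^*}\Gamma^*<\infty$; pick an optimal $\eta$ for $\Gamma^*$ and, by the coarea/level-set construction applied to the constant-$0$-weak-upper-gradient situation (or directly: every small perturbation produces admissible surfaces of arbitrarily small $\eta$-mass), derive that some $S\in\Gamma^*$ has $\int_S\eta\,d\Ha<1$, contradicting admissibility — so $\modd_{p^*}\Gamma^*=\infty$. The main obstacle throughout is the coarea inequality with the correct dependence of the constant only on the data of $X$, and verifying that the level sets $\partial\{u>t\}$ are legitimate members of $\Gamma^*$ (compactness in $\overline G$, separation, finite $\Ha$-measure) for a.e.\ level $t$; once these are in place the rest is Hölder and bookkeeping.
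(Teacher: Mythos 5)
Your overall strategy---estimate the sweep of level sets of a test function $u$ via a coarea inequality, use them as competitors for $\Gamma^*$, and close with H\"older---is exactly the paper's plan, so you have identified the right route. But there is a concrete gap in the construction of $u$. You take the extremal $\rho$ for $\Gamma$ and set $u(x)=\min\{1,\inf_\gamma\int_\gamma\rho\,ds\}$, then apply a coarea inequality for codimension-one Hausdorff measure to $u$. The coarea estimate available here (the paper's Proposition~\ref{prop:coarea}) is proved \emph{for locally Lipschitz $u$} with a $p$-integrable upper gradient. Your $u$ is only a Newtonian-type function built from an $L^p$ density; it is lower semicontinuous and quasicontinuous, but there is no reason for it to be locally Lipschitz, or even continuous, for general $p\in(1,\infty)$. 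The sentence ``this Newtonian-type function is genuinely continuous on $G'$ and has well-defined boundary behaviour'' is an overclaim, and without it the application of the coarea inequality and the identification of $\partial\{u>t\}$ as legitimate, closed, finite-$\Ha$-measure separators both break down.

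The paper sidesteps this issue with a specific input you are missing: the Kallunki--Shanmugalingam theorem that the path $p$-modulus equals the \emph{locally Lipschitz} $p$-capacity,
\[
\modd_p\Gamma=\capp_p^L\Gamma,
\]
which lets one take the infimum over upper gradients $\rho$ of \emph{locally Lipschitz} functions $u:G\to[0,1]$ with $u|_E=0$, $u|_F=1$. For such $u$ the level sets $u^{-1}(t)$ are closed separators with $\Ha(u^{-1}(t))<\infty$ for a.e.\ $t$ (by the coarea estimate itself), and Proposition~\ref{prop:coarea} applies directly, giving
\[
1\le \int_{(0,1)}^{*}\!\int_{u^{-1}(t)}g\,d\Ha\,dt\le C\int_G g\rho\,d\mu
\le C\|g\|_{L^{p^*}}\|\rho\|_{L^p}
\]
for every admissible $g$ of $\Gamma^*$; taking infima and invoking $\modd_p\Gamma=\capp_p^L\Gamma$ finishes. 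Note also that the paper uses $u^{-1}(t)$ rather than $\partial\{u>t\}$, which is the set that the coarea integral actually controls. Your handling of the degenerate case $\modd_p\Gamma=0$ is likewise vague; in the paper's framework it is immediate: if $\modd_p\Gamma=0$ then $\capp_p^L\Gamma=0$, so there are locally Lipschitz $u$'s with upper gradients of arbitrarily small $L^p$-norm, and the displayed inequality forces $\|g\|_{L^{p^*}}=\infty$ for any admissible $g$, i.e.\ $\modd_{p^*}\Gamma^*=\infty$. So the missing idea is not the coarea-plus-H\"older scheme, which you have, but the reduction to locally Lipschitz competitors via $\modd_p\Gamma=\capp_p^L\Gamma$; without that reduction, the coarea step is unjustified.
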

\begin{theorem}\label{thm:upperbound}
Let $1<p<\infty$. If $\modd_{p^*}\Gamma^*< \infty$, then 
\begin{equation}
\label{eq:mobido}
(\modd_p\Gamma)^\frac{1}{p}(\modd_{p^*}\Gamma^*)^\frac{1}{p^*}\leq C, 
\end{equation}
where the constant $C$ depends only on the data of $X$. If $\modd_{p^*}\Gamma^*= \infty$, then ${\modd_p\Gamma=0}$.
\end{theorem}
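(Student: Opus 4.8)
The plan is to establish the inequality \eqref{eq:mobido} by constructing, from a near-optimal admissible function for $\Gamma^*$, an admissible (or weakly admissible) function for $\Gamma$ whose $L^p$-norm is controlled. Suppose $\modd_{p^*}\Gamma^* < \infty$ and let $\rho$ be the $p^*$-weakly admissible function for $\Gamma^*$ furnished by Lemma \ref{lemma:minimizer}, so that $\int_X \rho^{p^*}\,d\mu = \modd_{p^*}\Gamma^*$ and $\int_S \rho\,d\Ha \geq 1$ for $p^*$-a.e.\ separating surface $S$. The guiding idea, going back to Gehring and Ziemer, is that $\rho$ should be thought of (up to a codimension-one/area adjustment) as the "boundary density" of a function $u$ on $G'$ that equals $0$ near $E$ and $1$ near $F$, whose level sets are the separating surfaces; the upper gradient of such a $u$ is then the natural candidate admissible function for the path family $\Gamma$. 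Concretely, I would define $u(x)$ via the co-area/distance-type functional $u(x) = \inf \int_S \rho\,d\Ha$ over surfaces $S$ separating $E$ from $x$ (or a path-integral analogue), show $u$ is a bounded function on $G'$ with $0$ on $\partial E$-side and $\geq 1$ on $\partial F$-side, and that a multiple $C\rho^{p^*-1}$ (the exponent coming from the duality $p^* - 1 = p^*/p$, i.e.\ $(\rho^{p^*-1})^p = \rho^{p^*}$) serves as an upper gradient for $u$ — this is precisely where the weak $1$-Poincar\'e inequality and the Ahlfors regularity enter, via a Whitney decomposition of $G'$ (Lemma \ref{lemma:whitneypeite}) together with the codimension-1 Hausdorff measure estimates and a co-area inequality relating $\int_G g\,d\mu$ to $\int \big(\int_{\{u=t\}} g\,d\Ha\big)\,dt$.

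The key steps, in order, are: (1) invoke Lemma \ref{lemma:minimizer} to get the optimal $\rho$ for $\Gamma^*$; (2) build the potential $u$ from $\rho$ and verify its separation/boundary behaviour, so that for any $\gamma \in \Gamma$ joining $E$ to $F$ one has $\osc_\gamma u \geq 1$; (3) prove that $g := C\rho^{p^*/p}$ is a ($p$-weak) upper gradient of $u$ on $G'$ — here one estimates oscillation of $u$ on Whitney balls by the Poincar\'e inequality and controls the telescoping sum, using $Q$-regularity to pass between $\mu$ and $\Ha$ at the relevant scales; (4) conclude $g$ (suitably extended by $+\infty$ off $G'$, or after noting the exceptional path family has zero $p$-modulus) is $p$-weakly admissible for $\Gamma$, giving
\[
\modd_p\Gamma \leq \int_X g^p\,d\mu = C^p \int_X \rho^{p^*}\,d\mu = C^p\,\modd_{p^*}\Gamma^*,
\]
which rearranges to \eqref{eq:mobido}; (5) handle the degenerate case $\modd_{p^*}\Gamma^* = \infty$ separately, showing that if $\modd_p\Gamma > 0$ then one can reverse the construction — from a nontrivial admissible function for $\Gamma$ build test surfaces (level sets of a path-potential) whose $p^*$-modulus is finite — contradicting $\modd_{p^*}\Gamma^* = \infty$; equivalently this degenerate statement follows by taking limits in the finite case.

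The main obstacle I expect is Step (3): showing that a power of $\rho$ is an upper gradient of the potential $u$ in a genuinely metric setting, where there is no smooth co-area formula and the separating sets are merely compact sets of finite codimension-1 measure rather than nice hypersurfaces. One must make sense of "level sets of $u$ separate $E$ and $F$" $\Ha$-measurably and for a.e.\ level, and control the interaction between the Whitney-scale Poincar\'e estimates and the $\Ha$-mass of $\rho$ on these level sets; the interchange of the co-area inequality with the admissibility condition $\int_S \rho\,d\Ha \geq 1$ is delicate because admissibility is only known for $p^*$-a.e.\ $S$. A secondary technical point is verifying that the exceptional families (paths meeting a fixed null set, surfaces where admissibility fails) have vanishing $p$- or $p^*$-modulus so that weak admissibility suffices throughout; this should follow from standard Fuglede-type arguments but needs the co-area estimate to be set up carefully. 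As the excerpt notes, this upper bound "seems to be new even in the euclidean setting," so the construction of $u$ and the verification of its upper gradient are the genuinely novel core of the argument.
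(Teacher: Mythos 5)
Your proposed route is genuinely different from the paper's, and as written it contains an algebraic error that reveals a missing idea. In your step~(4) you claim $\modd_p\Gamma \leq C^p\,\modd_{p^*}\Gamma^*$ and say this ``rearranges'' to \eqref{eq:mobido}. It does not: \eqref{eq:mobido} is equivalent to $\modd_p\Gamma \leq C^p(\modd_{p^*}\Gamma^*)^{1-p}$, and since $p>1$ the exponent $1-p$ is negative, so the two bounds point in opposite directions when $\modd_{p^*}\Gamma^*$ is large. Tracing back, the source of the error is that the function you want to be $p$-weakly admissible for $\Gamma$ is not $C\rho^{p^*-1}$ but rather $C\rho^{p^*-1}/\modd_{p^*}\Gamma^*$; with that normalization one gets $\modd_p\Gamma \leq C^p(\modd_{p^*}\Gamma^*)^{-p}\int\rho^{p^*}\,d\mu = C^p(\modd_{p^*}\Gamma^*)^{1-p}$, which is the right thing. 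But a potential of the form $u(x)=\inf_S\int_S\rho\,d\Ha$ only gives oscillation $\gtrsim 1$ along paths from $E$ to $F$ (that is just the admissibility of $\rho$), not oscillation $\gtrsim\modd_{p^*}\Gamma^*$; and there is no reason the level-set definition of $u$, built from $\Ha$-integrals of $\rho$ over separating sets, should be Lipschitz-type controlled by line integrals of $\rho^{p^*-1}$ without further input. The factor $\modd_{p^*}\Gamma^*$ has to come from somewhere, and your construction does not produce it.

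The paper supplies exactly this missing ingredient with a variational (Euler--Lagrange) argument rather than a potential. After replacing $\Gamma^*$ by the truncated families $\Gamma_j^*$ from \eqref{eq:pullistuma} and taking the extremal $\rho_j$ from Lemma \ref{lemma:minimizer}, the paper proves Lemma \ref{lemma:variation}: for any other $p^*$-integrable weakly admissible $\phi$,
\[
\modd_{p^*}\Gamma_j^* \leq \int_G \phi\,\rho_j^{p^*-1}\,d\mu,
\]
obtained by differentiating $t\mapsto\int(t\phi+(1-t)\rho_j)^{p^*}$ at $t=0$. The Lagrange multiplier on the left is precisely the normalizing factor you are missing. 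The Whitney decomposition (Lemma \ref{lemma:whitneypeite}) and the relative isoperimetric inequality (Lemma \ref{lemma:ipm}) are not used to show that a power of $\rho$ is an upper gradient of anything; they are used to build, for each fixed path $\gamma\in\Gamma$, an explicit admissible test function $\phi^n_j$ for $\Gamma^*_j$ that is supported in a Whitney neighbourhood of $|\gamma|$, namely $\phi^n = C\sum_{B\in\mathcal{B}^n} r_B\mu(B)^{-1}\chi_{\lambda'B}$. Admissibility of $\phi^n$ comes from the fact that any separating $S$ must ``cross'' $|\gamma|$ and, by the isoperimetric inequality, must carry $\Ha$-mass $\gtrsim \mu(B)/r_B$ in the crossing ball. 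Feeding $\phi^n_j$ into Lemma \ref{lemma:variation} and letting $n\to\infty$ (with Fuglede's lemma) yields $\int_{|\gamma|}\rho_j^{p^*-1}\,d\Ha^1 \geq C^{-1}\modd_{p^*}\Gamma^*_j$ for $p$-a.e.\ $\gamma$, which is the correctly normalized admissibility. The degenerate case then follows by letting $j\to\infty$ and using \eqref{eq:pullistumalimit}. So both the route and the pivotal lemma differ from your sketch, and the normalization issue is not a technicality but the heart of why the variational argument is needed.
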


Note that the conclusions in Theorems \ref{thm:lowerbound} and \ref{thm:upperbound} are biLipschitz invariant. Also recall that a complete metric space supporting a Poincar\'e inequality is $C$-quasiconvex for some $C=C(X)$. \emph{Thus we may, and will, assume that $X$ is a geodesic metric space}. Note that in geodesic spaces we can choose $\lambda_P=1$. For these facts see Theorem 8.3.2 and Remark 9.1.19 in \cite{HKST}. 

Theorem \ref{thm:duality} follows by combining Theorems \ref{thm:lowerbound} and \ref{thm:upperbound}, and recalling that $\mathcal{H}$ is comparable to the $(Q-1)$-dimensional Hausdorff 
measure in Ahlfors $Q$-regular spaces. Theorems \ref{thm:lowerbound} and \ref{thm:upperbound} will be proved in Sections \ref{section:lowerbound} and \ref{section:upperbound}, respectively. We now show how they imply Corollary \ref{cor:intro}. 
\begin{proof}[Proof of Corollary \ref{cor:intro}]
The ``only if" part follows directly from Theorem \ref{thm:duality}. To prove the ``if" part, notice first that Theorem \ref{thm:duality} shows that \eqref{eq:pathintro} holds 
for all path families $\Gamma(E,F;G)$ joining continua $E$ and $F$ inside $G$. Injecting this estimate into the proof of Theorem 4.7 in \cite{HeinonenKoskela1998} 
shows that $f$ is locally quasisymmetric, with constants depending only on the given data. On the other hand, Theorem 10.9 of \cite{Tyson2001} shows that locally quasisymmetric maps satisfy \eqref{eq:pathintro} for all path families. The required linear local connectedness and Loewner properties of $X$ and $Y$ are guaranteed by \cite[Theorem 3.3]{Korte2007} and \cite[Theorem 5.7]{HeinonenKoskela1998}. The ``if" part follows. 
\end{proof}

\section{Proof of Theorem \ref{thm:lowerbound}}\label{section:lowerbound}
Let $X, G, E, F, \Gamma$ and $\Gamma^*$ be as in Section \ref{section:results}. Fix $1<p<\infty$. Note that the constant function $1/\dist(E, F)$ restricted on $G$ is admissible for $\Gamma$. Therefore $\modd_p\Gamma$ is finite.

We need the following result of Kallunki and Shanmugalingam \cite{KallunkiShan2001}: The locally Lipschitz $p$-capacity of $\Gamma$ is defined to be
\[
\capp_p^L\Gamma=\inf_\rho\int_G\rho^p\ d\mu, 
\]
where the infimum is taken over every non-negative Borel-measurable function $\rho$ that is an upper gradient to some locally Lipschitz function $u: G\rightarrow [0, 1]$ with $u|_E=0$ and $u|_F=1$. 

Theorem 1.1 in \cite{KallunkiShan2001} reads as follows: if $1<p<\infty$, then 
\begin{equation} \label{eq:caponmod}
\modd_p\Gamma=\capp_p^L\Gamma
\end{equation}
for any choice of $E, F$ and $G$.

The proof of Theorem \ref{thm:lowerbound} is based on the following coarea estimate. 
\begin{proposition}\label{prop:coarea}
Let $u: G\rightarrow \R$ be locally Lipschitz and let $\rho$ be a $p$-integrable upper gradient of $u$ in $G$. Let $g: G\rightarrow [0, \infty]$ be a $p^*$-integrable Borel function. Then
\begin{equation}\label{eq:coarea}
\int_{\R}^*\int_{u^{-1}(t)}g\ d\Ha dt\leq C\int_{G}g\rho\ d\mu
\end{equation}
for some $C=C(X)$.
\end{proposition}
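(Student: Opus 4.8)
The plan is to establish \eqref{eq:coarea} by a Whitney-type decomposition of $G$ combined with the weak $1$-Poincar\'e inequality, which controls the oscillation of $u$ on balls in terms of the $L^1$-average of $\rho$. The starting point is the classical fact that in a metric measure space supporting a weak $1$-Poincar\'e inequality, a locally Lipschitz function $u$ satisfies a pointwise estimate via the maximal function: for $x,y$ in a ball $B$ with $\lambda_P B \subset\subset G$,
\[
|u(x)-u(y)|\le C\,d(x,y)\bigl(\mathcal{M}_{\lambda_P d(x,y)}(\rho\chi_{\lambda_P B})(x)+\mathcal{M}_{\lambda_P d(x,y)}(\rho\chi_{\lambda_P B})(y)\bigr);
\]
this is the standard telescoping argument over dyadic balls. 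Equivalently, on each Whitney ball $B_i=B(x_i,r_i)$ from Lemma \ref{lemma:whitneypeite} (applied with $A=G$ and a fixed large $n$, so that $\lambda_P$-dilates stay inside $G$ since $X$ is geodesic and $\lambda_P=1$), the oscillation of $u$ is bounded by $C r_i \dashint_{C B_i}\rho\,d\mu$.

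\textbf{Key steps.} First I would fix a Whitney cover $\mathcal{B}=\{B_i\}$ of $G$ with bounded overlap as in Lemma \ref{lemma:whitneypeite}. On each $B_i$, the image $u(B_i)$ is an interval $I_i$ of length $\ell_i\le C r_i\,\dashint_{CB_i}\rho\,d\mu$. For a fixed level $t\in\R$, the level set $u^{-1}(t)$ meets only those $B_i$ with $t\in I_i$, and on each such $B_i$ we have the crude bound $\Ha(u^{-1}(t)\cap B_i)\le \Ha_\delta$-type estimate $\le C\,\mu(B_i)/r_i$ coming directly from the definition of the codimension $1$ spherical Hausdorff content (the single ball $CB_i$ covers $u^{-1}(t)\cap B_i$). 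Hence
\[
\int_{u^{-1}(t)}^* g\,d\Ha \;\le\; C\sum_{i:\,t\in I_i}\Bigl(\sup_{CB_i} \text{(a mollified }g)\Bigr)\frac{\mu(B_i)}{r_i},
\]
but to keep $g$ honest I would instead integrate first and use the covering: $\int_{u^{-1}(t)}^* g\,d\Ha \le C\sum_{i:\,t\in I_i}\dashint_{CB_i} g\,d\mu\cdot\frac{\mu(B_i)}{r_i}\cdot\frac{1}{r_i}\cdot r_i$ — more cleanly, bound $\Ha\mres u^{-1}(t)$ restricted to $B_i$ by $C r_i^{-1}\mu\mres CB_i$ in the sense of content and pair against $g$. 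Integrating in $t$ and using $\int_\R \chi_{I_i}(t)\,dt=\ell_i\le C r_i\dashint_{CB_i}\rho\,d\mu$, Fubini gives
\[
\int_\R^*\!\int_{u^{-1}(t)} g\,d\Ha\,dt \;\le\; C\sum_i \Bigl(\dashint_{CB_i} g\,d\mu\Bigr)\frac{\mu(B_i)}{r_i}\cdot r_i\,\dashint_{CB_i}\rho\,d\mu \;=\; C\sum_i \mu(CB_i)\Bigl(\dashint_{CB_i} g\,d\mu\Bigr)\Bigl(\dashint_{CB_i}\rho\,d\mu\Bigr).
\]
Finally I would absorb the product of averages: since $\dashint_{CB_i} g\,d\mu \le \mathcal{M}g(x)$ for every $x\in CB_i$, bound one factor by the maximal function and keep the other as an honest integral, so the right side is $\le C\sum_i \int_{CB_i} g\,\mathcal{M}\rho\,d\mu$ — wait, that has the wrong function inside the maximal operator; instead pair $\dashint_{CB_i}\rho$ against $\int_{CB_i} g$, giving $\le C\sum_i \bigl(\inf_{CB_i}\mathcal{M}\rho\bigr)^{-1}$-free estimate $\le C\sum_i \int_{CB_i} g\,d\mu \cdot \dashint_{CB_i}\rho\,d\mu$. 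Using bounded overlap and $\dashint_{CB_i}\rho\,d\mu \le \mathcal{M}\rho(y)$ for $y\in CB_i$, this is $\le C\int_G g\,\mathcal{M}\rho\,d\mu$. This is \emph{not} quite $\int_G g\rho\,d\mu$, so the last move is to invoke Hölder with exponents $p^*,p$ together with the boundedness of $\mathcal{M}$ on $L^p$ — but that only yields $\le C\|g\|_{p^*}\|\rho\|_p$, which is weaker than \eqref{eq:coarea}. To get the stated pointwise-integral form $C\int_G g\rho\,d\mu$ I would instead, on each Whitney ball, refine the oscillation estimate to a \emph{pointwise} telescoping bound and run a more careful covering argument avoiding the maximal function on the $\rho$-side, e.g. by the Coarea/BV slicing machinery: the key identity is that for Lipschitz $u$, $\Ha\mres u^{-1}(t)$ behaves like the "derivative" of the distributional gradient measure $|Du|\le \rho\,d\mu$ in the direction of $u$, and a localized version of the standard metric coarea inequality (see \cite[Proposition 6.3.3]{HKST}-type statements) gives $\int_\R^* \Ha(u^{-1}(t)\cap A)\,dt \le C\int_A \rho\,d\mu$ for Borel $A\subset G$; testing this against simple functions approximating $g$ and using monotone convergence yields \eqref{eq:coarea}.

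\textbf{Main obstacle.} The crux is precisely the last point: upgrading from the easy Poincaré-plus-Whitney bound (which naturally produces maximal functions and hence only an $L^{p^*}$–$L^p$ estimate) to the sharp \emph{bilinear pointwise} bound $\int_\R^*\int_{u^{-1}(t)} g\,d\Ha\,dt\le C\int_G g\rho\,d\mu$. The honest route is to prove a localized metric coarea inequality for the codimension $1$ spherical Hausdorff measure $\Ha$: for every locally Lipschitz $u$ on $G$ with upper gradient $\rho$ and every Borel set $A\subset\subset G$,
\[
\int_\R^{*}\Ha\bigl(u^{-1}(t)\cap A\bigr)\,dt \;\le\; C\int_A \rho\,d\mu,
\]
which is established by covering $A$ at scale $\delta$, using on each small ball $B_j$ the bound $\Ha_\delta(u^{-1}(t)\cap B_j)\le \mu(B_j)/r_j$ and the Poincaré-type oscillation estimate $\mathcal{L}^1(u(B_j))\le C r_j\,\mu(CB_j)^{-1}\int_{CB_j}\rho\,d\mu$, then summing with bounded overlap and letting $\delta\to 0$. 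Granting this inequality, \eqref{eq:coarea} follows by approximating $g$ from below by simple functions $g=\sum_k a_k\chi_{A_k}$, applying the inequality on each $A_k$, and invoking monotone convergence in both the $t$-integral and the sum; the upper integral $\int^*$ is needed because $t\mapsto \int_{u^{-1}(t)} g\,d\Ha$ is a priori only an upper-measurable function of $t$, and I would note that $\int^*$ is superadditive so the limiting step is legitimate.
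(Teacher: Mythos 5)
You correctly identify the main reduction (approximate $g$ by simple functions and reduce to showing $\int_\R^*\Ha(u^{-1}(t)\cap A)\,dt\le C\int_A\rho\,d\mu$ for Borel $A$), and you correctly sense the central difficulty: a naive Whitney-plus-Poincar\'e argument produces the unrestricted maximal function $\mathcal{M}\rho$ on the right-hand side, and invoking H\"older and the $L^p$-boundedness of $\mathcal{M}$ then only gives $\|g\|_{p^*}\|\rho\|_p$, which is strictly weaker than the stated bilinear estimate. However, your proposed repair --- the oscillation bound $\mathcal{L}^1(u(B_j))\le C r_j\,\dashint_{CB_j}\rho\,d\mu$ on each Whitney ball --- does not follow from the weak $1$-Poincar\'e inequality and is in fact false. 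The Poincar\'e inequality controls the $L^1$-average deviation $\dashint_B|u-u_B|$, not the sup-oscillation, and for a Lipschitz function the two can be very different: take $u$ to be a Lipschitz spike of width $\varepsilon\ll r_j$ supported in a tiny sub-ball of $B_j$ (in $\R^n$, say $u(x)=\max(0,L\varepsilon-L|x-x_0|)$), so that $\mathcal{L}^1(u(B_j))\approx L\varepsilon$ while $r_j\dashint_{CB_j}\rho\approx L\varepsilon^n/r_j^{n-1}\ll L\varepsilon$. Coupled with the crude single-ball Hausdorff-content bound $\Ha_\delta(u^{-1}(t)\cap B_j)\le \mu(B_j)/r_j$ (which is extremely lossy in exactly this spike scenario, since the actual level sets are tiny spheres), your chain of inequalities breaks in the middle.

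The paper resolves this by a more delicate two-scale argument. First it proves a purely Lipschitz coarea estimate (Lemma \ref{lemma:coarea1}): for $L$-Lipschitz $u$ and Borel $A$, $\int_\R^*\Ha(u^{-1}(t)\cap A)\,dt\le C L\mu(A)$, by a Vitali covering at scale $\delta$. Then on each Whitney ball $B\subset 5B\subset G$ it uses the pointwise estimate of Lemma \ref{lemma:newtonapprox} to decompose $B$ into the level sets $B_k=\{2^k<\mathcal{M}_{10\,\diam B}\rho\le 2^{k+1}\}$ of the \emph{restricted} maximal function; on each $B_k$ the restriction $u|_{B_k}$ is genuinely $C2^k$-Lipschitz, so after extending it to a Lipschitz function on $X$ one may apply Lemma \ref{lemma:coarea1} with $A\cap B_k$. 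Summing over $k$ recovers $\int_{A\cap B}\mathcal{M}_{10\,\diam B}\rho\,d\mu$, and summing over the Whitney cover with bounded overlap gives $\int_A\mathcal{M}_{10R}\rho\,d\mu$ where $R$ is the supremum of the Whitney radii. The key point you are missing is then the passage $R\to 0$: since $\mathcal{M}_{10R}\rho\to\rho$ pointwise a.e.\ by the Lebesgue differentiation theorem and is dominated by $\mathcal{M}\rho\in L^p(G)$, dominated convergence removes the maximal function entirely. Thus the maximal operator appears only at restricted, shrinking scales, and no H\"older step is needed --- which is precisely how the sharp bilinear form $\int_G g\rho\,d\mu$ is obtained.
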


Before proving Proposition \ref{prop:coarea}, we show how it together with \eqref{eq:caponmod} yields Theorem \ref{thm:lowerbound}. 

\begin{proof}[Proof of Theorem \ref{thm:lowerbound}]
First assume that $\modd_{p}\Gamma>0$. If $\modd_{p^*}\Gamma^*=\infty$, there is nothing to prove. Otherwise let $g\in L^{p^*}(G)$ be admissible for $\Gamma^*$. Let $u: G\rightarrow [0, 1]$ be locally Lipschitz with $u|_E=0$ and $u|_F=1$. Let $\rho$ be an upper gradient of $u$. We may assume that $\rho$ is $p$-integrable. Note that for every $t\in (0, 1)$ the set $u^{-1}(t)$ separates $E$ and $F$, and is closed in $G$. Moreover, by (\ref{eq:coarea}) $\Ha(u^{-1}(t))<\infty$ for almost every $t$. Proposition \ref{prop:coarea} and H\"older's inequality give
\[
1\leq \int_{(0, 1)}^*\int_{u^{-1}(t)}g\ d\Ha dt\leq C\int_Gg\rho\ d\mu\leq C\left(\int_Gg^{p^*}d\mu \right)^\frac{1}{p^*}\left(\int_G\rho^{p}\,d\mu \right)^\frac{1}{p}.
\]
Now take infima over admissible functions $g$ and $\rho$ and apply \eqref{eq:caponmod} to get the lower bound. The same argument leads to a contradiction if $\modd_{p^*}\Gamma^*$ is finite and $\modd_p\Gamma=0$.
\end{proof}

We start the proof of Proposition \ref{prop:coarea} with a classical estimate for Lipschitz functions. See \cite[Theorem 7.7]{Mattila} for a euclidean version.
\begin{lemma}\label{lemma:coarea1}
Let $u: G\rightarrow\R$ be $L$-Lipschitz and let $A$ be a $\mu$-measurable subset of $G$. Then 
\begin{equation}\label{eq:coarea1}
\int_\R^* \Ha(u^{-1}(t)\cap A)\ dt\leq C(X)L\mu(A).
\end{equation}
\end{lemma}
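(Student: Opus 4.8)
\textbf{Proof proposal for Lemma \ref{lemma:coarea1}.}

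The plan is to prove the estimate first at the level of the spherical Hausdorff $\delta$-content and then pass to the measure $\Ha$ by letting $\delta\to 0$, using Fatou's lemma to handle the integral in $t$. Fix $\delta>0$. Cover $A$ by countably many balls $B_i=B(x_i,r_i)$ with $r_i\le\delta$ and with $\sum_i \mu(B_i)/r_i\le \Ha_\delta(A)+\e$; by the $5r$-covering theorem (or by passing to a Vitali-type subfamily and enlarging) we may arrange that, after relabelling, the balls have controlled overlap, i.e. $\sum_i\chi_{B_i}\le N$ pointwise for a dimensional constant $N$, at the cost of replacing $r_i$ by $5r_i$ inside the content sum, which only changes constants. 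For each $i$, since $u$ is $L$-Lipschitz, the image $u(B_i)$ is an interval of length at most $2Lr_i$, so $\chi_{u(B_i)}$ as a function of $t$ integrates to at most $2Lr_i$. The key pointwise observation is that for each $t$, the balls $B_i$ with $t\in u(B_i)$ form a cover of $u^{-1}(t)\cap A$ by balls of radius $\le\delta$, whence
\[
\Ha_\delta\big(u^{-1}(t)\cap A\big)\le \sum_{i:\,t\in u(B_i)} \frac{\mu(B_i)}{r_i}.
\]

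Integrating this inequality in $t$ over $\R$ and using Tonelli's theorem (all terms are nonnegative) together with $\int_\R\chi_{u(B_i)}\,dt\le 2Lr_i$ gives
\[
\int_\R^{*}\Ha_\delta\big(u^{-1}(t)\cap A\big)\,dt \le \sum_i \frac{\mu(B_i)}{r_i}\cdot 2Lr_i = 2L\sum_i\mu(B_i)\le 2NL\,\mu\Big(\bigcup_i B_i\Big)\le C(X)\,L\,\mu(A_\delta),
\]
where $A_\delta$ is the $\delta$-neighbourhood of $A$ (the overlap bound converts the sum of measures into a measure of the union up to a constant, and the union is contained in a small neighbourhood of $A$). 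Since $\mu$ is Radon and $\bigcap_{\delta>0}A_\delta$ is a $G_\delta$ set containing $A$ of the same measure, we have $\mu(A_\delta)\to\mu(A)$ as $\delta\to 0$ (reducing to $A$ bounded, then to $A$ Borel, is harmless as $A$ is $\mu$-measurable and we only need an upper bound). Finally, $\Ha_\delta(u^{-1}(t)\cap A)$ increases to $\Ha(u^{-1}(t)\cap A)$ as $\delta\downarrow 0$, so Fatou's lemma (or monotone convergence for the upper integral) yields
\[
\int_\R^{*}\Ha\big(u^{-1}(t)\cap A\big)\,dt \le \liminf_{\delta\to 0}\int_\R^{*}\Ha_\delta\big(u^{-1}(t)\cap A\big)\,dt \le C(X)\,L\,\mu(A),
\]
which is \eqref{eq:coarea1}.

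The main technical point to be careful about is the interplay between the upper integral $\int^{*}$ and the covering/overlap argument: the function $t\mapsto\Ha_\delta(u^{-1}(t)\cap A)$ need not be Borel measurable, so I would phrase the integrated inequality as a pointwise domination by the Borel function $t\mapsto\sum_{i}\chi_{u(B_i)}(t)\mu(B_i)/r_i$ and apply monotonicity of the upper integral before invoking Tonelli on the (genuinely measurable) majorant. The other point requiring a word is the passage $\mu(A_\delta)\to\mu(A)$ in a possibly non-finite-measure space; since $G\subset\subset X$ and $\mu$ is doubling hence $\mu(G)<\infty$, all the relevant sets have finite measure and outer regularity of the Radon measure $\mu$ gives the convergence directly. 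Everything else is a routine bookkeeping of dimensional constants absorbed into $C(X)$.
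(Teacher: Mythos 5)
Your overall strategy --- estimate the spherical Hausdorff $\delta$-content of each slice $u^{-1}(t)\cap A$ using a cover by small balls, integrate in $t$, and pass to the limit $\delta\to 0$ by monotone convergence for upper integrals --- is the same as the paper's. However, there is a genuine gap in the limiting step, plus a couple of technical mis-statements along the way.

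The gap is the assertion that $\mu(A_\delta)\to\mu(A)$ as $\delta\to 0$. This is false in general: $\bigcap_{\delta>0}A_\delta=\overline{A}$, and $\mu(\overline{A})$ can strictly exceed $\mu(A)$ (take $A$ a dense open subset of $G$ with $\mu(A)<\mu(G)$, in which case $\mu(A_\delta)=\mu(G)$ for every $\delta>0$). Your parenthetical appeal to outer regularity of the Radon measure $\mu$ does not repair this, since outer regularity gives $\mu(A)=\inf\{\mu(U)\,:\,U\supset A\ \text{open}\}$, not $\mu(A)=\mu(\overline{A})$. The clean fix, which is exactly what the paper does, is to use outer regularity \emph{at the outset} to reduce to the case where $A$ is open, and then apply the $5r$-covering theorem to the Vitali family of balls $B\subset A$ of radius $\le\delta/5$. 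The resulting disjoint balls $B_i$ lie inside $A$, so $\sum_i\mu(B_i)=\mu\bigl(\bigcup_iB_i\bigr)\le\mu(A)$ directly, and no $\delta$-neighbourhood ever appears.

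Two smaller points. First, the ``controlled overlap'' claim for the $5r$-enlargements is wrong: if $\{B_i\}$ is the disjoint subfamily produced by the $5r$-covering theorem, the dilated balls $5B_i$ can overlap infinitely often (nested-scale configurations in $\R$ already do this). You do not need bounded overlap at all; just keep the disjointness of the $B_i$ and use doubling to write $\sum_i\mu(5B_i)\le C\sum_i\mu(B_i)=C\,\mu\bigl(\bigcup_iB_i\bigr)$. Second, your initial choice of a cover nearly optimizing $\Ha_\delta(A)$ plays no role anywhere in the argument and can be deleted; the bound you actually use is against $\mu$, not against the codimension-one content of $A$. With these repairs your proof coincides with the paper's.
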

\begin{proof}
Since $\mu$ is a Radon-measure, we may assume that $A$ is open. Let $\delta>0$. Apply the $5r$-covering theorem to find a countable collection of disjoint balls $\{B_i\}$ with $B_i=B(x_i, r_i)\subset A$, $5r_i\leq \delta$ and
\[
A\subset\bigcup_i5B_i.
\]
Define a Borel function $g: \R\rightarrow [0, \infty]$ with
\[
g=\sum_i\frac{\mu(5B_i)}{5r_i}\chi_{u(5B_i)}.
\]
Now for every $t\in\R$ we have $\Ha_\delta(u^{-1}(t)\cap A)\leq g(t)$, so by the doubling property of $\mu$, 
\begin{align*}
\int_\R^* \Ha_\delta(u^{-1}(t)\cap A)\ dt&\leq \int_\R g(t)\ dt \\
&\leq \sum_i\frac{\mu(5B_i)}{5r_i}|u(5B_i)| \\
&\leq C(X)L\sum_i\mu(B_i) \\
&\leq C(X)L\mu(A).
\end{align*}
Applying the monotone convergence theorem for upper integrals finishes the proof.
\end{proof}
The Poincar\'e inequality comes into play with the following lemma.

\begin{lemma}\label{lemma:newtonapprox}
Let $U\subset G$ be open and connected and suppose $v: U\rightarrow \R$ is locally integrable and $\rho_v: X\rightarrow [0, \infty]$ is an upper gradient of $v$ in $U$ that vanishes outside $G$. Let $N\subset U$ be the set of Lebesgue points of $v$. Then
\[
|v(x)-v(y)|\leq C(X)|x-y|(\mathcal{M}_{10|x-y|}\rho_v(x)+\mathcal{M}_{10|x-y|}\rho_v(y))
\]
whenever $x, y\in B\cap N$ for some ball $B$ that satisfies $5B\subset U$.
\end{lemma}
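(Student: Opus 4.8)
The plan is to run the standard telescoping argument that produces pointwise estimates from a Poincar\'e inequality, adapted to the situation where the function-upper gradient pair lives only on the open connected set $U$. Fix $x,y\in B\cap N$ with $5B\subset U$, set $r=|x-y|$, and let $B_x^0=B(x,2r)$. Note $B_x^0$ contains $y$, and since $B_x^0\subset 5B\subset U$, and $\lambda_P=1$ in our geodesic setting, we may apply the Poincar\'e inequality on chains of balls centred at $x$ (and symmetrically at $y$) that stay inside $U$: here we use the device described in the Poincar\'e-inequality subsection, replacing $v$ by $v\chi_{cB_x^0}$ and $\rho_v$ by $\rho_v\chi_{B_x^0}+\infty\chi_{X\setminus B_x^0}$, so that the inequality is legitimately available on any ball contained in $B_x^0$.

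The main steps are as follows. First I would form the dyadic chain $B_i=B(x,2^{1-i}r)$, $i\ge 0$, and estimate $|v_{B_{i+1}}-v_{B_i}|$: since $B_{i+1}\subset B_i$ and $\mu$ is doubling, $\mu(B_i)\le C_\mu^2\mu(B_{i+1})$, so
\[
|v_{B_{i+1}}-v_{B_i}|\le C_\mu^2\dashint_{B_i}|v-v_{B_i}|\,d\mu\le C(X)\,2^{-i}r\dashint_{B_i}\rho_v\,d\mu\le C(X)\,2^{-i}r\,\mathcal{M}_{10r}\rho_v(x),
\]
where in the last step I bound the average of $\rho_v$ over $B_i=B(x,2^{1-i}r)$ by the restricted maximal function at $x$, legitimate because $2^{1-i}r\le 2r\le 10r$. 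Summing the geometric series and using that $v_{B_i}\to v(x)$ as $i\to\infty$ (because $x$ is a Lebesgue point of $v$, i.e.\ $x\in N$), I obtain $|v(x)-v_{B_x^0}|\le C(X)\,r\,\mathcal{M}_{10r}\rho_v(x)$. The identical argument at $y$, using the chain $B(y,2^{1-i}r)\subset B(y,2r)\subset 5B\subset U$ (valid since $d(x,y)=r$ forces $B(y,2r)\subset B(x,3r)\subset 5B$), gives $|v(y)-v_{B_y^0}|\le C(X)\,r\,\mathcal{M}_{10r}\rho_v(y)$ with $B_y^0=B(y,2r)$. Finally, since $B_x^0=B(x,2r)$ and $B_y^0=B(y,2r)$ both contain $B(x,r)\cap B(y,r)$ and have comparable measures, I compare the two averages through, say, $B_x^0\subset B(y,3r)$: $|v_{B_x^0}-v_{B_y^0}|\le C(X)\dashint_{B(y,3r)}|v-v_{B(y,3r)}|\,d\mu\le C(X)\,r\,\mathcal M_{10r}\rho_v(y)$, using the Poincar\'e inequality once more on $B(y,3r)\subset 5B\subset U$ and noting $3r\le 10r$. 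Combining the three estimates by the triangle inequality yields the claim.

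The only genuine subtlety — and the step I would be most careful about — is ensuring that every ball on which the Poincar\'e inequality is invoked is safely contained in $U$ and that its dilate $\lambda_P B$ (here $\lambda_P=1$, so $B$ itself) is too, so that the restriction trick from the Poincar\'e-inequality subsection applies and $\rho_v$'s vanishing outside $G$ causes no trouble. Since $5B\subset U$ and all balls used are centred at $x$ or $y$ with radius at most $3r\le 5\,\mathrm{rad}(B)$ — after possibly shrinking to the case $r\le \mathrm{rad}(B)$, which holds automatically as $x,y\in B$ — this containment is routine but should be stated explicitly. Everything else is the classical telescoping-plus-maximal-function computation.
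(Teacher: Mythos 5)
Your overall strategy — a dyadic telescoping chain controlled by the $1$-Poincar\'e inequality and the restricted maximal function, combined with a Lebesgue-point argument to pass to pointwise values — is exactly the approach the paper takes (it simply cites the first part of \cite[Theorem 8.1.7]{HKST} for the single-point estimate $|v(x)-v_B|\leq Cr\,\mathcal{M}_{4r}\rho_v(x)$ rather than redoing the chain). However, there is a genuine gap in your argument, concentrated in the final paragraph.

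You assert that ``$r\le \mathrm{rad}(B)$, which holds automatically as $x,y\in B$.'' This is false: if $B=B(x_0,R)$ and $x,y\in B$, the triangle inequality only gives $r=|x-y|<2R$, and $r$ can indeed be arbitrarily close to $2R$. This matters for your step 4, where you compare $v_{B_x^0}$ and $v_{B_y^0}$ by invoking the Poincar\'e inequality on $B(y,3r)$. Since $y\in B$ and $3r$ can be close to $6R$, the ball $B(y,3r)$ is only guaranteed to lie in $B(x_0,R+3r)\subset B(x_0,7R)$, which may extend well outside $5B$ and hence outside $U$. On that ball the Poincar\'e inequality cannot be applied (the restriction device from the preliminaries requires a dilate of the ball to lie inside $U$), so the inequality $|v_{B_x^0}-v_{B_y^0}|\le C\,r\,\mathcal{M}_{10r}\rho_v(y)$ is unjustified precisely in the regime $\tfrac{4}{3}R\lesssim r<2R$. (Even the chain balls $B(x,2r)$ and $B(y,2r)$ are only barely inside $5B$ when $r$ is near $2R$, so there is no slack $c>1$ for the truncation trick.)

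The paper avoids this with a short case distinction, and you should add the analogue. If $R\leq\tfrac{5}{2}|x-y|$, apply the single-point estimate with the given ball $B$ at both $x$ and $y$; then $4R\leq 10|x-y|$ so the maximal function scale is acceptable, and $R\leq\tfrac{5}{2}|x-y|$ turns the factor $R$ into $|x-y|$. If instead $R>\tfrac{5}{2}|x-y|$, apply the single-point estimate with the smaller ball $B(x,2|x-y|)$; then $B(x,10|x-y|)\subset B(x_0,R+4R)=5B\subset U$, so all the chain balls are safely inside $U$, and the scale $8|x-y|\leq 10|x-y|$ is again acceptable. Alternatively, you could keep your chain but replace the faulty comparison $v_{B_x^0}\to v_{B(y,3r)}\to v_{B_y^0}$ by a comparison through the small ball $B(x,r)\subset B_x^0\cap B_y^0$, bounding $|v_{B(x,r)}-v_{B_x^0}|$ and $|v_{B(x,r)}-v_{B_y^0}|$ by Poincar\'e on $B_x^0$ and $B_y^0$ respectively; but then you would still need to check that $B_x^0$ and $B_y^0$ are compactly inside $U$, which fails when $r$ approaches $2R$, so the case distinction appears unavoidable.
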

\begin{proof}
The case $U=X$ is classical and proved in, for example, \cite[Theorem 8.1.7]{HKST}. We follow the same proof for the case of general $U$. Let $B=B(x_0, r)$ satisfy $5B\subset U$. Let $x\in B$ be a Lebesgue point of $v$. The first part of the proof of \cite[Theorem 8.1.7]{HKST} shows that 
\begin{equation}\label{eq:estimaatti}
|v(x)-v_{B}|\leq Cr\mathcal{M}_{4r}\rho_v(x)
\end{equation}
for some constant $C=C(X)$. Let $y$ be another Lebesgue point of $v$ in $B$. If $r\leq \frac{5}{2}|x-y|$, then applying (\ref{eq:estimaatti}) twice gives the desired result. Otherwise apply (\ref{eq:estimaatti}) with $B(x, 2|x-y|)$ instead.
\end{proof}
\begin{proof}[Proof of Proposition \ref{prop:coarea}]
By standard real analysis arguments it suffices to show that 
\begin{equation}\label{eq:coarea1}
\int_{[0, 1]}^* \Ha(u^{-1}(t)\cap A)\ dt\leq C(X)\int_{A}\rho\ d\mu.
\end{equation}
Let us first show that
\[
\int_{[0, 1]}^* \Ha(u^{-1}(t)\cap A\cap B)\ dt\leq C(X)\int_{A\cap B}\mathcal{M}_{10\diam B}\rho\ d\mu
\]
for any Borel set $A\subset G$ and any ball $B\subset 5B\subset G$. Continuity of $u$ and Lemma \ref{lemma:newtonapprox} give 
\begin{equation}\label{eq:ulip}
|u(x)-u(y)|\leq C(X)|x-y|(\mathcal{M}_{10\diam B}\rho(x)+\mathcal{M}_{10\diam B}\rho(y))
\end{equation}
for any $x, y\in B$. 

Let $B_k=\{x\in B\ |\ 2^k<\mathcal{M}_{10\diam B}\rho(x)\leq 2^{k+1}\}$. Abuse the notation and define the sets $B_{-\infty}$ and $B_\infty$ as the sets of points $x\in B$ where, respectively, $\mathcal{M}_{10\diam B}\rho(x)=0$ or $\mathcal{M}_{10\diam B}\rho(x)=\infty$. Recall that we assume $u$ to be locally Lipschitz. Since $B$ is compactly contained in $G$, $u|_B$ is Lipschitz. Now Lemma \ref{lemma:coarea1} applied to any Lipschitz extension of $u|_B$ implies that
\[
\int_{[0, 1]}^* \Ha(u^{-1}(t)\cap A\cap B_\infty)\ dt = 0, 
\]
since the integrability of $\mathcal{M}\rho$ implies that $\mu(B_\infty)=0$. On the other hand, if $B_{-\infty} \neq \emptyset$ then $\rho=0$ almost everywhere in $5B$. Since we may assume that $X$ is geodesic, 
it moreover follows that $u$ is constant in $B$. We conclude that $\Ha(u^{-1}(t)\cap A\cap B_{-\infty})$ is nonzero for at most one $t$.

It follows from (\ref{eq:ulip}) that $u|_{B_k}$ is $C(X)2^{k}$-Lipschitz. Let $u_k: X\rightarrow \R$ be any Lipschitz extension of $u|_{B_k}$ with the same Lipschitz constant. Now the previous observations together with the monotone convergence theorem, Lemma \ref{lemma:coarea1} and the definition of $B_k$ give
\begin{align*}
\int_{[0, 1]}^*\Ha(u^{-1}(t)\cap A\cap B)\ dt&=\sum_k\int_{[0, 1]}^*\Ha(u_k^{-1}(t)\cap A\cap B_k)\ dt \\
&\leq C(X)\sum_k2^k\mu(A\cap B_k) \\
&\leq C(X)\int_{A\cap B}\mathcal{M}_{10\diam B}\rho\ d\mu.
\end{align*}
Applying a Whitney-type covering, see Lemma \ref{lemma:whitneypeite}, we get 
\[
\int_{[0, 1]}^* \Ha(u^{-1}(t)\cap A)\ dt\leq C(X)\int_{A}\mathcal{M}_{10R}\rho\ d\mu,
\]
where $R$ is the supremum of the diameters of the balls used in the cover. We can make $R$ arbitrarily small, as is implied by Lemma \ref{lemma:whitneypeite}. The Lebesgue differentiation theorem 
and dominated convergence then yield (\ref{eq:coarea1}).
\end{proof}

\section{Proof of Theorem \ref{thm:upperbound}}\label{section:upperbound}
Consider the sets 
\begin{equation}\label{eq:pullistuma}
\Gamma^*_j=\{ S\in\Gamma^*\ |\ \dist(S, E\cup F)>j^{-1}\}.
\end{equation}
By applying the proof of Proposition 5.2.11 in \cite{HKST} and the general Fuglede's lemma, see \cite[Theorem 3]{Fuglede1957}, it can be shown that
\begin{equation}\label{eq:pullistumalimit}
\lim_{j\rightarrow\infty}\modd_{p^*}\Gamma^*_j=\modd_{p^*}\Gamma^*.
\end{equation}

The following result is the key tool in connecting the two moduli. 
\begin{lemma}(Relative isoperimetric inequality)\label{lemma:ipm}\\
Let $S\in\Gamma^*$ and let $U$ be the component of $G-S$ that contains $E$. There are constants $C=C(X)$ and $\lambda=\lambda(X)>1$ such that 
\[
\mathrm{min}\left\lbrace \frac{\mu(B-U)}{\mu(B)}, \frac{\mu(B\cap U)}{\mu(B)}\right\rbrace\leq C\frac{r}{\mu(\lambda B)}\Ha(\partial U\cap \lambda B)
\]
for all balls $B\subset\subset G$.
\end{lemma}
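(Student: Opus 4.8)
The plan is to deduce the relative isoperimetric inequality from the weak $1$-Poincaré inequality applied to (a mollified version of) the characteristic function $v = \chi_U$, together with an upper gradient estimate that is controlled by the codimension $1$ Hausdorff measure of $\partial U$. First I would fix a ball $B = B(x_0, r)$ with $\overline{B}\subset\subset G$ and observe that since $X$ is geodesic we may take $\lambda_P = 1$ in the Poincaré inequality, so that for $u\in L^1_{\loc}$ and an upper gradient $\rho$ of $u$ on a neighbourhood of $\overline{B}$,
\[
\dashint_B |u - u_B|\,d\mu \le C_P\,\diam(B)\dashint_{B}\rho\,d\mu.
\]
Applied to $u = \chi_U$ (restricted appropriately, using the trick described after the Poincaré inequality in Section~\ref{section:definitions} to make it locally integrable on $X$), the left-hand side is comparable to $\min\{\mu(B\cap U),\mu(B-U)\}/\mu(B)$, because for a set $A$ one has $\dashint_B|\chi_A - (\chi_A)_B|\,d\mu = 2\,\tfrac{\mu(A\cap B)}{\mu(B)}\tfrac{\mu(B - A)}{\mu(B)}$, and $\tfrac{\mu(B-A)}{\mu(B)}\ge \tfrac12$ on whichever side is the minority side. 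So the whole content is to produce an upper gradient $\rho$ of $\chi_U$ on a slightly larger ball whose integral over $\lambda B$ is controlled by $\Ha(\partial U\cap\lambda B)$.

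The key point is that $\chi_U$ is \emph{not} genuinely (even weakly) differentiable, so one cannot literally write down such a $\rho$. The standard workaround — and the step I expect to be the main obstacle — is to replace $\chi_U$ by a Lipschitz approximation $v_\e$ that equals $1$ well inside $U$, equals $0$ well inside the complement, and transitions across an $\e$-neighbourhood of $\partial U$, with $|v_\e|\le 1$ and Lipschitz constant $\sim 1/\e$; then $\rho_\e = \tfrac{1}{\e}\chi_{N_\e(\partial U)}$ is an upper gradient of $v_\e$, and one needs
\[
\int_{\lambda B}\rho_\e\,d\mu = \frac{1}{\e}\,\mu\big(N_\e(\partial U)\cap \lambda B\big)\;\lesssim\; \frac{\Ha(\partial U\cap\lambda' B)}{1}
\]
uniformly as $\e\to 0$, for some slightly larger dilation $\lambda'$. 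This is exactly a Minkowski-content versus codimension $1$ Hausdorff measure estimate: covering $\partial U\cap\lambda'B$ by balls $B_i = B(y_i, r_i)$ with $\sum \mu(B_i)/r_i$ close to $\Ha(\partial U\cap\lambda'B)$ and $r_i\le\e$, one has $N_\e(\partial U)\cap\lambda B\subset\bigcup 3B_i$ (for $\e$ small), so by doubling $\mu(N_\e(\partial U)\cap\lambda B)\le C\sum\mu(B_i)\le C\e\sum\mu(B_i)/r_i\le C\e(\Ha(\partial U\cap\lambda'B)+\text{error})$; dividing by $\e$ and letting $\e\to 0$ (so the content converges to the measure) gives the bound. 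One must be a little careful that the covering balls are small and the dilation $\lambda$ is chosen so that $\lambda'B\subset\subset G$ and $5B'\subset G$ for the Poincaré-inequality neighbourhood, which is where the constant $\lambda=\lambda(X)>1$ in the statement comes from.

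Putting these together: apply the Poincaré inequality to $v_\e$ on $B$ (with the larger ball inside $G$), bound the left side below by $c\min\{\mu(B\cap U),\mu(B-U)\}/\mu(B)$ minus an error that vanishes with $\e$ (since $v_\e\to\chi_U$ in $L^1(B)$ by dominated convergence), bound the right side above by $C_P\,\diam(B)\cdot\tfrac{1}{\mu(B)}\cdot\tfrac{1}{\e}\mu(N_\e(\partial U)\cap\lambda B)$, and let $\e\to 0$ using the Minkowski-content estimate to replace the last factor by $C\,\Ha(\partial U\cap\lambda B)$. Rearranging, and using $\diam(B)\le 2r$ together with doubling to replace $\mu(B)$ by a comparable constant times $\mu(\lambda B)$, yields
\[
\min\left\{\frac{\mu(B-U)}{\mu(B)},\frac{\mu(B\cap U)}{\mu(B)}\right\}\le C\,\frac{r}{\mu(\lambda B)}\,\Ha(\partial U\cap\lambda B),
\]
which is the claim. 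The only genuinely delicate points are the $L^1$-convergence $v_\e\to\chi_U$ on $B$ (needs $\mu(\partial U\cap B)=0$, which follows from $\Ha(\partial U\cap G)<\infty$ and $s>1$, since finite codimension $1$ Hausdorff measure forces $\mu$-measure zero) and the uniformity in the Minkowski-content estimate, both of which are routine given the doubling hypothesis.
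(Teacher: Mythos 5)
The paper's proof of Lemma~\ref{lemma:ipm} is a three-line citation argument: enlarge $B$ to a ball $B'\subset G$ with $\Ha(\partial B')<\infty$ (via Lemma~\ref{lemma:coarea1} applied to the distance function), invoke Theorem 6.2 of Korte--Lahti to conclude that $B'\cap U$ is a set of \emph{finite perimeter} with perimeter measure controlled by $\Ha\mres\partial U$, and then apply Korte--Lahti's Theorem 1.1, which states that the weak $1$-Poincar\'e inequality is equivalent to the relative isoperimetric inequality for sets of finite perimeter. Your proposal instead tries to produce a mollified $v_\e$ of $\chi_U$ with upper gradient $\rho_\e=\e^{-1}\chi_{N_\e(\partial U)}$ and run the Poincar\'e inequality directly; this is a natural idea, but it has a genuine gap.

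The gap is in the Minkowski-content estimate. You cover $\partial U\cap\lambda'B$ by balls $B_i=B(y_i,r_i)$ with $r_i\le\e$ and $\sum\mu(B_i)/r_i$ close to $\Ha(\partial U\cap\lambda'B)$, and assert $N_\e(\partial U)\cap\lambda B\subset\bigcup 3B_i$. This inclusion fails: if $z\in N_\e(\partial U)$ and $w\in\partial U$ with $d(z,w)<\e$ lies in $B_i$, then $d(z,y_i)<\e+r_i$, which lies in $3B_i$ only when $r_i\ge\e/2$; but a near-optimal cover for the Hausdorff measure will typically use balls with $r_i\ll\e$. If you enlarge the radii to $\approx\e$ to restore the inclusion, then doubling turns $\mu(B(y_i,2\e))/\e$ into roughly $(\e/r_i)^{s-1}\mu(B_i)/r_i$, which blows up unless $r_i\gtrsim\e$. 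The underlying issue is structural: finite codimension-one Hausdorff measure does \emph{not} imply finite upper Minkowski content, so $\e^{-1}\mu(N_\e(\partial U)\cap\lambda B)$ need not stay bounded as $\e\to0$, let alone be controlled by $\Ha(\partial U\cap\lambda'B)$. This is exactly why the paper routes through the BV/perimeter machinery: the perimeter is defined as a relaxation (an infimum over \emph{all} $L^1$-approximating sequences and their upper gradients), and the non-trivial content of Korte--Lahti's Theorem 6.2 is precisely that this relaxed quantity is bounded by $\Ha(\partial U)$, a statement one cannot obtain by fixing the naive $\e$-neighbourhood mollification. Your remaining steps (using $\lambda_P=1$ in geodesic spaces, the identity for $\dashint_B|\chi_A-(\chi_A)_B|$, the $L^1$-convergence $v_\e\to\chi_U$ from $\mu(\partial U)=0$) are fine, but without the Minkowski-to-Hausdorff bound the argument does not close.
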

\begin{proof}
Given a ball $B\subset\subset G$ there is a larger ball $B'\subset G$ with $B\subset B'$ and $\Ha(\partial B')<\infty$ (apply Lemma \ref{lemma:coarea1} below to the distance function). Applying Theorem 6.2 of \cite{KorteLahti2014} shows that $B'\cap U$ is a so called \emph{set of finite perimeter}. The relative isoperimetric inequality for sets of finite perimeter follows from the $1$-Poincar\'e inequality by \cite[Theorem 1.1]{KorteLahti2014}. 
\end{proof}
Note that Lemma \ref{lemma:ipm} requires the weak $1$-Poincar\'e inequality. See Section \ref{section:examples} for examples of spaces that support a weak $(1+\varepsilon)$-Poincar\'e inequality for a given $\varepsilon>0$, but no relative isoperimetric inequality.

Fix $\gamma\in\Gamma$. The idea behind the proof of Theorem \ref{thm:upperbound} is to construct admissible functions $\phi^n_j$ of $\Gamma_j^*$ that are supported close to $|\gamma|$, and then apply Lemma \ref{lemma:variation} below. 

Let $n\geq 2$ be a natural number and let $\mathcal{B}^n$ be the collection of balls obtained by applying Lemma \ref{lemma:whitneypeite} with $\Omega=G'$ and $A=|\gamma|\cap G'$. Moreover, given 
$k \in \mathbb{Z}$ let $\mathcal{G}^n_k=\mathcal{G}_k$ be the collections of balls constructed in the proof of \ref{lemma:whitneypeite}.  

Now let $S\in\Gamma^*$. Let $U$ be the component of $G-S$ that contains $E$.  Let
\[
T_n=\sup\left\lbrace t\in (0, 1)\ \bigg|\  \frac{\mu(U\cap B)}{\mu(B)}\geq\frac{1}{2}\text{ for all }B\in\mathcal{B}^n\text{ such that }\gamma(t)\in B\right\rbrace.
\]
Note that there exists an $\varepsilon>0$, so that $N_\varepsilon(E)\subset U$ and $N_\varepsilon(F)\subset G-U$. Combining this observation with Lemma \ref{lemma:whitneypeite} (i) and continuity of $\gamma$ shows that $T_n$ is well defined and that $T_n\in (0, 1)$. It follows that there exist balls $B_i=B(x_i, r_i)\in \mathcal{B}^n$ for $i=1, 2$ such that $B_1\cap B_2\neq\emptyset$ and
\[
\frac{\mu(B_1\cap U)}{\mu(B_1)}\leq \frac{1}{2}\leq \frac{\mu(B_2\cap U)}{\mu(B_2)}.
\]
Now let $x\in B_1\cap B_2$ and let $i\in\{1, 2\}$ be the index for which $r_i=\max\{r_1, r_2\}$. Let $B=B(x, 2r_i)$. It follows from Lemma \ref{lemma:whitneypeite} (ii) and Lemma \ref{lemma:ipm}, that 
\[
C(X)\leq\mathrm{min}\left\lbrace \frac{\mu(B-U)}{\mu(B)}, \frac{\mu(B\cap U)}{\mu(B)}\right\rbrace\leq C'\frac{r_i}{\mu(\lambda B)}\Ha(\partial U\cap \lambda B) 
\]
for some $C'=C'(X)$ and $\lambda=\lambda(X)$. Therefore
\[
\Ha(S\cap \lambda'B_i)\geq\Ha(\partial U\cap\lambda'B_i)\geq \frac{1}{C(X)}r_i^{-1}\mu(B_i),
\]
where $\lambda'=1+2\lambda$. We conclude that the function
\[
\phi^n=C\sum_{B\in\mathcal{B}^n}r_{B}\mu(B)^{-1}\chi_{\lambda'B},
\]
where $r_B$ is the radius of $B$, is admissible for $\Gamma^*$, but it may not be $p^*$-integrable. This is why we consider the families $\Gamma_j^*$ instead. 

Note that if $5B\in\mathcal{B}^n$ satisfies $B\in \mathcal{G}^n_{-k}$ for sufficiently large $k$ depending on $j$ and $n$, then given any $S\in \Gamma_j^*$  
\[
\frac{\mu(U\cap B)}{\mu(B)}\in \{0, 1\}.
\]
Here $U$ is again the component of $G-S$ that contains $E$. Together with the construction of $\phi^n$ this implies that there is a $k(j, n)\in\Z$ such that 
\[
\phi^n_j=C\sum_{k\geq k(j, n)}\sum_{B: \frac{1}{5}B\in \mathcal{G}^n_k}r_B\mu(B)^{-1}\chi_{\lambda'B}
\]
is admissible for $\Gamma^*_j$. It is $p^*$-integrable, since each $\mathcal{G}^n_k$ contains only finitely many balls and $G$ is bounded.

Now let $j$ be large enough, so that $\modd_{p^*}\Gamma^*_j$ is nonzero. The existence of such a $j$ follows by combining Theorem \ref{thm:lowerbound} with \eqref{eq:pullistumalimit}. Apply Lemma \ref{lemma:minimizer} to find a $p^*$-weakly admissible  function $\rho_j$ of $\Gamma_j^*$ with the property
\[
\modd_{p^*}\Gamma_j^*=\int_G\rho_j^{p^*}\ d\mu.
\]
\begin{lemma}\label{lemma:variation}
Let $\phi$ be another $p^*$-integrable, $p^*$-weakly admissible function of $\Gamma_j^*$. Then
\[
\modd_{p^*}\Gamma_j^*\leq \int_G \phi\rho_j^{p^*-1}\ d\mu.
\]
\end{lemma}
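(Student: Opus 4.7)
The proof plan is a standard first variation argument for the convex functional $\rho \mapsto \int_G \rho^{p^*}\,d\mu$ restricted to the convex set of $p^*$-weakly admissible functions of $\Gamma_j^*$.

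First, I would observe that the set of $p^*$-weakly admissible functions is convex: if $\rho_j$ and $\phi$ both fail admissibility only on subfamilies of zero $p^*$-modulus, then so does $(1-t)\rho_j+t\phi$ for any $t\in[0,1]$, since outside the union of those exceptional (still $p^*$-null) subfamilies one has $\int_S((1-t)\rho_j+t\phi)\,d\Ha=(1-t)\int_S\rho_j\,d\Ha+t\int_S\phi\,d\Ha\geq 1$. Hence, by the minimizing property (\ref{eq:mini2}) of $\rho_j$,
\[
\int_G\rho_j^{p^*}\,d\mu\leq\int_G\bigl((1-t)\rho_j+t\phi\bigr)^{p^*}\,d\mu\qquad\text{for every }t\in(0,1).
\]
Subtracting the left-hand side, dividing by $t>0$, and letting $t\to 0^+$, the conclusion $\modd_{p^*}\Gamma_j^*\leq\int_G\phi\rho_j^{p^*-1}\,d\mu$ follows provided one may interchange the limit with the integral.

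The key estimate justifying this interchange comes from the convexity and smoothness of $x\mapsto x^{p^*}$ on $[0,\infty)$. For each $t\in(0,1)$, I would establish the pointwise sandwich
\[
p^*\rho_j^{p^*-1}(\phi-\rho_j)\leq\frac{\bigl((1-t)\rho_j+t\phi\bigr)^{p^*}-\rho_j^{p^*}}{t}\leq \phi^{p^*}-\rho_j^{p^*},
\]
where the lower bound follows from the tangent line inequality for the convex function $x^{p^*}$ at the base point $\rho_j$, and the upper bound follows from the secant inequality $((1-t)a+tb)^{p^*}\leq(1-t)a^{p^*}+tb^{p^*}$. Since $\phi,\rho_j\in L^{p^*}(G)$ and $\rho_j^{p^*-1}\in L^p(G)$, H\"older's inequality gives integrability of both bounding functions. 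Dominated convergence (or monotone convergence, using that difference quotients of convex functions are monotone in $t$) then yields
\[
\lim_{t\to 0^+}\int_G\frac{((1-t)\rho_j+t\phi)^{p^*}-\rho_j^{p^*}}{t}\,d\mu=\int_G p^*\rho_j^{p^*-1}(\phi-\rho_j)\,d\mu.
\]
Combining this with the inequality above gives $0\leq\int_G p^*\rho_j^{p^*-1}(\phi-\rho_j)\,d\mu$, which rearranges to $\int_G\rho_j^{p^*}\,d\mu\leq\int_G\phi\rho_j^{p^*-1}\,d\mu$, i.e., the claim.

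The only mildly delicate point is the behavior at points where $\rho_j=0$: there $\rho_j^{p^*-1}=0$ (since $p^*>1$) while the difference quotient becomes $t^{p^*-1}\phi^{p^*}\to 0$, so the pointwise limit identity holds everywhere, and the upper bound $\phi^{p^*}-\rho_j^{p^*}\leq\phi^{p^*}$ still controls the integrand there. No subtlety arises from the weak admissibility since adjusting $\phi$ and $\rho_j$ on a $p^*$-null family of surfaces affects neither the $L^{p^*}$-norms nor the variational inequality.
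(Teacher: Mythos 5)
Your proof is correct and takes essentially the same approach as the paper: a first-variation argument for the convex energy $\rho\mapsto\int_G\rho^{p^*}\,d\mu$ at the minimizer $\rho_j$ along the segment towards $\phi$. The paper differentiates $\int_G\omega_t^{p^*}\,d\mu$ at $t=0$ without comment; you supply the (correct) convexity sandwich justifying the interchange of limit and integral, which is a welcome but not essential addition.
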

\begin{proof}
For any $t\in [0, 1]$ let $\omega_t=t\phi+(1-t)\rho_j$. Now for any $t$
\[
\modd_{p^*}\Gamma_j^*\leq \int_G \omega_t^{p^*} d\mu
\]
with equality at $t=0$. It follows that 
\[
0\leq \frac{d}{dt}\bigg|_{t=0}\int_G\omega_t^{p^*} d\mu=p^*\int_G(\phi-\rho_j)\rho_j^{p^*-1} \ d\mu,
\]
which finishes the proof.
\end{proof}

Applying Lemma \ref{lemma:variation}, the doubling property of $\mu$, the definition of the Hardy-Littlewood maximal operator and (iii) gives 
\begin{align*}
\modd_{p^*}\Gamma_j^*&\leq \int_G\phi_j^n\rho_j^{p^*-1}\ d\mu \\
&\leq C(X)\sum_{B\in\mathcal{B}^n}r_B\dashint_{\lambda'B}\rho_j^{p^*-1}\ d\mu \\
&\leq C(X)\sum_{B\in\mathcal{B}^n}r_B\inf_{x\in B}\mathcal{M}_{C(X, G)/n}(\rho_j^{p^*-1})(x)\\
&\leq C(X)\int_{|\gamma|}\mathcal{M}_{C(X, G)/n}(\rho_j^{p^*-1})\ d\Ha^1.
\end{align*}
Letting $n\rightarrow\infty$ and applying Fuglede's lemma \cite[p. 131]{HKST} we see that $C(\modd_{p^*}\Gamma^*_j)^{-1}\rho^{p^*-1}_j$ is admissible for $\Gamma$. Therefore
\[
(\modd_p\Gamma)^\frac{1}{p}\leq C(\modd_{p^*}\Gamma^*_j)^{-1}\left(\int_G\rho_j^{p^*}\ d\mu\right)^\frac{1}{p}=C(\modd_{p^*}\Gamma^*_j)^{-\frac{1}{p^*}}.
\]
Applying \eqref{eq:pullistumalimit} finishes the proof.
\section{Counter-examples}\label{section:examples}
The relative isoperimetric inequality is an instrumental part of the proof of Theorem \ref{thm:upperbound}. By \cite{KorteLahti2014} it is equivalent to the weak $1$-Poincar\'e inequality. Let $\varepsilon\in (0, 1)$. We now construct a space $X$ that satisfies the hypotheses of Theorem \ref{thm:duality} apart from the 1-Poincar\'e inequality. Instead, $X$ will support a $(1+\varepsilon)$-Poincar\'e inequality. 

Let $K\subset [1/4, 3/4]$ be a self-similar Cantor set with Hausdorff-dimension $1-\varepsilon$ and the following property: for all $x\in K$ and $0<r<1$ 
\[
\Ha^{1-\varepsilon}_\infty(K\cap B(x, r))\geq Cr^{1-\varepsilon}
\]
for some $C>0$ that does not depend on $r$. Let $Q=[0, 1]^3\subset\R^3$ and let $A=[1/4, 3/4]\times K\times\{0\}\subset Q$. Then for any $x\in A$ and $0<r\leq \mathrm{diam}(Q)$ 
\begin{equation}\label{eq:poincareehto}
\Ha^{2-\varepsilon}_\infty(A\cap B(x, r))\geq Cr^{2-\varepsilon}
\end{equation}
for some (other) $C>0$ that does not depend on $r$. Let $Q_1$ and $Q_2$ be two copies of the space $Q$. Finally, let $X=Q_1\sqcup_A Q_2$, two cubes glued together along $A$. Equip $X$ with the geodesic metric $d$ that restricts to the metrics of the cubes in either cube, and for $x\in Q_1$ and $y\in Q_2$ set 
\[
d(x, y)=\inf_{a\in A}(|x-a|+|a-y|).
\] 
Equip $X$ with the measure $\mu$ that restricts to the $3$-dimensional Lebesgue measure on both cubes. It follows immediately from the definitions that $(X, d, \mu)$ is a complete geodesic Ahlfors $3$-regular metric space. The validity of a weak $(1+\varepsilon)$-Poincar\'e inequality follows from (\ref{eq:poincareehto}) and \cite[Theorem 6.15]{HeinonenKoskela1998}. 

Now let $E\subset Q_1-A$ and $F\subset Q_2-A$ be nondegenerate continua and let $G=X$. Let $\Gamma$ and $\Gamma^*$ be as in Theorem \ref{thm:duality}. The modulus $\modd_3\Gamma$ is non-zero and finite, since $X$ is Loewner, see \cite{HeinonenKoskela1998}. On the other hand $\modd_{3^*}\Gamma^*=\infty$, since $\Gamma^*$ does not admit any admissible functions. To see this, note that $A$ separates $E$ and $F$ in $G$, but has vanishing $2$-measure. We conclude that $X$ does not satisfy the upper bound of Theorem \ref{thm:duality}. Note that this implies that $X$ does not support a weak $1$-Poincar\'e inequality. This can also be deduced from the main result of \cite{KorteLahti2014}.

\bibliographystyle{plain}
\bibliography{modulijuttubib}
\vspace{1em}
\noindent
Department of Mathematics and Statistics, University of Jyv\"askyl\"a, P.O.
Box 35 (MaD), FI-40014, University of Jyv\"askyl\"a, Finland.\\

\emph{E-mail:} \settowidth{\hangindent}{\emph{aaaaaaaaa}}A.L.: \textbf{atte.s.lohvansuu@jyu.fi} \\ K.R.: \textbf{kai.i.rajala@jyu.fi}
\end{document}